\documentclass[12pt, a4paper]{article}

\usepackage{amsmath, amsthm, amssymb, url, enumerate}
\usepackage[pdftex]{graphicx}
\usepackage{cases}
\usepackage{epic, eepic, ecltree}
\usepackage{stmaryrd}
\usepackage[T1]{fontenc}
\usepackage{textcomp}
\usepackage{color}
\usepackage[all]{xy}
\usepackage{cases}
\usepackage{stmaryrd}

\usepackage[svgnames]{xcolor}%
\usepackage{tikz}
\usetikzlibrary{intersections, calc}
\usetikzlibrary {arrows.meta}

\newtheorem{definition}{Definition}[section] 
\newtheorem{theorem}[definition]{Theorem}
\newtheorem{proposition}[definition]{Proposition} 
\newtheorem{lemma}[definition]{Lemma}
\newtheorem{remark}[definition]{Remark}
\newtheorem{corollary}[definition]{Corollary}

\newtheorem{example}[definition]{Example}
\renewcommand{\proofname}{Proof.}

\makeatother

\def\CC{\mathbb{C}}

\def\PP{\mathbb{P}}

\newcommand{\wt}{{\rm wt}}

\newcommand{\compl}[1]{\mathbb{C}^{#1}}

\makeatletter
\def\mapstofill@{%
   \arrowfill@{\mapstochar\relbar}\relbar\rightarrow}
\newcommand*\xmapsto[2][]{%
   \ext@arrow 0395\mapstofill@{#1}{#2}}
\makeatother

\begin{document}
\title{\vspace{-3cm}$k$-Wahl chains and cyclic quotient singularities}
\author{Yusuke\ Sato}
\date{}
\maketitle
\thispagestyle{empty}

\noindent

\begin{abstract}
We study two-dimensional cyclic quotient singularities defined by
$k$-Wahl chains, a class of Hirzebruch--Jung continued fractions
obtained inductively starting from $[k+2]$.
This class includes the classical Wahl singularities in the case $k=2$ and
also contains cyclic quotient singularities arising from $k$-generalized
Markov triples.
For singularities defined by $k$-Wahl chains, we prove that the combinatorics
of the continued fraction is encoded in the special representations of the
associated cyclic group.
We also study zero continued fractions on the dual side and obtain
consequences for the deformation theory of these singularities, including
the existence of extremal P-resolutions in the case of $1$-Wahl chains.
\end{abstract}

\noindent

\section{Introduction}

Two-dimensional cyclic quotient singularities form a fundamental class of surface singularities that can be described explicitly by Hirzebruch--Jung continued fractions. 
For a cyclic quotient singularity of type $\frac{1}{r}(1,a)$, where $r$ and $a$ are coprime,
the continued fraction expansion
\[
\frac{r}{a}=[b_{1},\dots,b_{\ell}]
= b_{1}-\cfrac{1}{b_{2}-\cfrac{1}{\ddots-\cfrac{1}{b_{\ell}}}},
\qquad b_{i}\in \mathbb{Z}_{\ge 2}.
\]
gives the self-intersection numbers of the exceptional curves in the minimal resolution. 
Because of this, combinatorial properties of Hirzebruch--Jung continued fractions often reflect geometric properties of the corresponding singularities.

In this paper, we study a distinguished class of cyclic quotient singularities arising from what we call \emph{$k$-Wahl chains}. 
A $k$-Wahl chain is defined by a Hirzebruch--Jung continued fraction obtained inductively from the initial chain $[k+2]$ by repeatedly attaching a $2$ either to the left or to the right, while increasing the opposite end by one.  
When $k=2$, this recovers the classical Wahl chains, and the associated cyclic quotient singularities are the Wahl singularities, which form a distinguished subclass of $T$-singularities and appear naturally in the study of $\mathbb Q$-Gorenstein deformations \cite{KSB, Wahl}.

Another motivation comes from number theory. 
The notion of $k$-Wahl chains is closely related to the theory of a $k$-generalized Markov equation.  
The \emph{$k$-generalized Markov equation} is introduced by Gyoda and Matsushita \cite{GM} as following:
\[
x^{2}+y^{2}+z^{2}+k(xy+yz+xz)=(3+3k)xyz.
\]
Its positive integer solutions are called \emph{$k$-generalized Markov triples}, and the integers appearing in such triples are called \emph{$k$-generalized Markov numbers}. 
The $k$-generalized Markov triple determines a Hirzebruch--Jung continued fraction, and this continued fraction is a $k$-Wahl chain \cite{GMS}. 
Thus, $k$-Wahl chains provide a natural geometric framework containing cyclic quotient singularities arising from $k$-generalized Markov numbers.

Our first main result establishes a representation-theoretic property of cyclic quotient singularities defined by $k$-Wahl chains. 
For a cyclic group $G=\frac{1}{r}(1,a)\subset GL(2,\mathbb C),$
the special McKay correspondence gives a bijection between the exceptional curves in the minimal resolution of $\mathbb C^2/G$ and the nontrivial special representations of $G$ \cite{Rie,Wun1}. 
To a $k$-Wahl chain, we associate its \emph{length encoding}, which records the pattern of successive attachments of $2$'s in the inductive construction. 
On the other hand, from the sequence of special representations of $G$, we define the \emph{length encoding of special representations} (LESR) by taking the run-length encoding of the corresponding differences. 
Our first main theorem shows that these two encodings coincide.

\begin{theorem}
Let $G=\frac{1}{r}(1,a)$ be a cyclic quotient singularity with $\gcd(r,a)=1$, and suppose that the Hirzebruch--Jung continued fraction  $\frac{r}{a}=[b_1,\dots,b_s]$ is a $k$-Wahl chain. 
Then the length encoding of $[b_1,\dots,b_s]$ coincides, up to reversal, with the length encoding of special representations of $G$.
\end{theorem}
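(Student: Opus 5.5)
We would prove the statement by expressing both encodings through the positions of the entries $b_t\ge 3$ in $[b_1,\dots,b_s]$, and then checking that the inductive construction of a $k$-Wahl chain changes both encodings in the same way.

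\textbf{Step 1 (differences of special representations).} By Wunram's description of special representations for $G=\frac{1}{r}(1,a)$, the nontrivial special representations are indexed by the remainders of the Hirzebruch--Jung algorithm:
\[
i_0=r,\quad i_1=a,\quad i_{t+1}=b_t i_t-i_{t-1},
\]
so that $r=i_0>i_1>\dots>i_s=1>i_{s+1}=0$. Put $d_t=i_{t-1}-i_t$ for $1\le t\le s+1$. The recursion gives
\[
d_{t+1}=d_t-(b_t-2)\,i_t .
\]
Since $i_t>0$ for $t\le s$, we get $d_{t+1}=d_t$ exactly when $b_t=2$, and $d_{t+1}<d_t$ exactly when $b_t\ge 3$. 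The differences are therefore non-increasing, so a run of equal consecutive differences never recurs later. It follows that the run-length encoding of $(d_1,\dots,d_{s+1})$ is determined by the positions of the entries $b_t\ge 3$.

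Concretely, if the chain is $2^{m_0},c_1,2^{m_1},c_2,\dots,c_p,2^{m_p}$ with all $c_j\ge 3$, the LESR is $(m_0+1,m_1+1,\dots,m_p+1)$. I will fix the paper's convention on whether $d_{s+1}=i_s$ is included, and adjust the $+1$'s accordingly. This step is a short computation.

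\textbf{Step 2 (structure of $k$-Wahl chains).} By induction on the number of attachments, I will show that a $k$-Wahl chain has this same form: blocks of $2$'s separated by entries $\ge 3$. The length encoding records how many consecutive times a $2$ was attached on one side before the operation switched to the other side.

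The key observation is the following. Attaching a $2$ on the left increments the right end. If that right end was a $2$, it becomes a $3$, which closes a block; if it was already $\ge 3$, it only grows. A run of $n$ consecutive left-attachments therefore creates a block of $n$ twos on the left, and a single new entry $\ge 3$ (or an enlarged one) on the right. Since the initial $[k+2]$ is itself $\ge 3$ for $k\ge 1$, each maximal run of same-side attachments corresponds exactly to one block of $2$'s in the final chain.

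I then want to show that reading the blocks from left to right gives the attachment runs in a definite order. My expectation is that the left blocks appear in reverse chronological order and the right blocks in chronological order, so the reading interleaves the two sides. I will prove this by tracking the two end entries through the induction.

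\textbf{Step 3 (matching).} Combining Steps 1 and 2, the LESR lists the block lengths (shifted by the boundary convention) in left-to-right order. The length encoding of the chain lists the attachment-run lengths. Step 2 should show these are the same multiset arranged as a sequence, possibly reversed, which accounts for the ``up to reversal'' in the statement.

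\textbf{Main obstacle.} I expect the hardest part to be the bookkeeping in Step 2. In particular, the boundary conventions have to line up exactly with the definition of the length encoding:
\begin{itemize}
\item whether the first run and the $+1$ offsets match;
\item the degenerate case where the two ends are the same entry, i.e.\ the initial $[k+2]$ before the first switch;
\item the case $k=1$, where entries $\ge 3$ arise only through increments.
\end{itemize}
I would first try a direct induction that carries the triple (chain, its length encoding, its difference sequence) and checks that each of the two attachment operations appends or extends the last run on the correct side.
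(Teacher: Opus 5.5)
Your Steps~1 and~2 are sound, and the expectation you state in Step~2 is correct: reading a $k$-Wahl chain from left to right, the blocks produced by left-side attachment runs appear from newest to oldest, and the blocks produced by right-side runs appear from oldest to newest. (A note on conventions: the paper's LESR uses only $a_1-a_2,\dots,a_{s-1}-a_s$, so only the interior entries $b_2,\dots,b_{s-1}$ matter, as in Lemma~\ref{lemma:LESRsame}. With this convention a run of $n$ same-side attachments contributes a run of exactly $n$ equal differences. Your $d_1,\dots,d_{s+1}$ would give $(1,3)$ for $[4,2,2]$ instead of $(2)$.)

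The gap is Step~3, and it is fatal. The arrangement from Step~2 does not interleave the two sides, and it is neither the chronological order recorded by the length encoding nor its reverse. Suppose the runs are $n_1$ (right), $n_2$ (left), $n_3$ (right). Then the chain is
\[
[\,n_3+2,\ (2)^{n_2-1},\ k+2+n_1,\ (2)^{n_1-1},\ n_2+2,\ (2)^{n_3}\,],
\]
with length encoding $(n_1,n_2,n_3)$ but LESR $(n_2,n_1,n_3)$. These agree up to reversal only if $n_1=n_2$.

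Here is a concrete counterexample. The Wahl chain $[4]\to[5,2]\to[6,2,2]\to[2,6,2,3]\to[3,6,2,3,2]=\frac{121}{43}$ has length encoding $(2,1,1)$. The group $\frac{1}{121}(1,43)$ has special representations $\rho_{43},\rho_8,\rho_5,\rho_2,\rho_1$, differences $(35,3,3,1)$, and hence LESR $(1,2,1)$.

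In general, your Steps~1--2 show the following. If the length encoding is $(c_1,\dots,c_m)$, then up to reversal the LESR is $(\dots,c_4,c_2,c_1,c_3,c_5,\dots)$. The entries are the same, but the two encodings coincide up to reversal only for $m\le 2$ or in special symmetric cases, such as the paper's example $(2,2,1)$. So the statement as written is false, and no proof of it can be completed. What your method actually proves is this ``unfolded'' version.

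The paper's inductive proof contains the same error. Reversing a chain fixes its length encoding but reverses its LESR, so the reduction to $b_s=2$ is not harmless. In Case~2, Corollary~\ref{cor:LESR2} and Lemma~\ref{lemma:LESRsame} applied to $[b_s+1,b_{s-1},\dots,b_1,2]$ give $(c_m,\dots,c_1,1)$, not $(c_1,\dots,c_m,1)$. Hence $[2,b_1,\dots,b_{s-1},b_s+1]$ has LESR $(1,c_1,\dots,c_m)$. This matches the length encoding $(c_1,\dots,c_m,1)$ up to reversal only when $(c_1,\dots,c_m)$ is a palindrome.
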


This theorem shows that the inductive combinatorics of $k$-Wahl chains is reflected precisely in the arrangement of special representations of the corresponding cyclic group. 
The second theme of this paper is the connection between $k$-Wahl chains and deformation theory via \emph{zero continued fractions}. 
A zero continued fraction is a negative continued fraction
\[
[a_1,\dots,a_s], \qquad a_i\geq 1,
\]
whose value is equal to zero. For a cyclic quotient singularity $X=\mathbb{C}^2/G$ with $G=\frac{1}{r}(1,a)$ and its Hirzebruch--Jung continued fraction $\frac{r}{a}=[b_1,\dots,b_s]$, we set a dual Hirzebruch--Jung continued fraction
\[
\frac{r}{r-a}=[e_1,\dots,e_\ell].
\]
A fundamental result of Christophersen \cite{Christophersen}, revisited from the toric viewpoint by Altmann \cite{Altmann}, states that the P-resolutions of $X$ are in one-to-one correspondence with the admissible zero continued fractions on the dual side. 
Thus, the existence of a P-resolution can be read combinatorially from the existence of a zero continued fraction supported by the dual Hirzebruch--Jung continued fraction. 
Since P-resolutions are in bijection with the irreducible components of the versal deformation space, zero continued fractions provide concrete information on the deformation theory of cyclic quotient singularities \cite{Altmann, Christophersen, KSB}.

In this direction, we study the minimal weight of zero continued fractions for dual $0$- and $1$-Wahl chains.

\begin{theorem}
Let $[b_1,\dots,b_s]$ be a Hirzebruch--Jung continued fraction with $s\geq 2$. Then the following hold:
\begin{itemize}
  \item[{\rm (i)}] If it is a dual $1$-Wahl chain, then $\alpha([b_1,\dots,b_s])=1$.
  \item[{\rm(ii)}] If it is a dual $0$-Wahl chain, then $\alpha([b_1,\dots,b_s])=2$,
\end{itemize}
where $\alpha([b_1,\dots,b_s])$ denotes the minimal weight of an admissible zero continued fraction.
\end{theorem}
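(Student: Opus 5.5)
The plan is to split the statement into an \emph{invariant} part, which is a pure counting argument, and an \emph{existence} part, which is a genuine construction. I take the weight of an admissible zero continued fraction $[a_1,\dots,a_s]\le[b_1,\dots,b_s]$ (entrywise) to be $\sum_i (b_i-a_i)$. I also use the standard fact that every zero continued fraction of length $s$ comes from a triangulation of an $(s+1)$-gon, so that $\sum_i a_i=3s-3$. Hence every admissible zero continued fraction of $[b_1,\dots,b_s]$ has the \emph{same} weight
\[
w([b_1,\dots,b_s])=\sum_{i=1}^s b_i-3s+3 .
\]
Consequently $\alpha([b_1,\dots,b_s])$ equals $w([b_1,\dots,b_s])$ as soon as at least one admissible zero continued fraction exists. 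It therefore suffices to compute $w$ on dual $k$-Wahl chains and then to prove existence.

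\textbf{Step 1: describing dual $k$-Wahl chains.} I will use Riemenschneider's point diagram to dualize the inductive construction. Appending a $2$ at one end of a chain corresponds to adding $1$ to the entry at the same end of the dual. Adding $1$ to an end entry corresponds to appending a $2$ at that end of the dual. So the move ``attach $2$ on one side and increase the other end by one'' is carried to the same type of move on the dual side. The base chain $[k+2]$ has dual $[2,\dots,2]$ with $k+1$ entries. Hence the dual $k$-Wahl chains are exactly the chains obtained from $[2^{k+1}]$ by repeatedly attaching a $2$ at one end and increasing the opposite end by one.

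\textbf{Step 2: the invariant.} Each such move raises the length by $1$ and the sum of the entries by $3$, so it leaves $w$ unchanged. For the base chain, $w([2^{k+1}])=2(k+1)-3(k+1)+3=2-k$. This gives $w=1$ for $k=1$ and $w=2$ for $k=0$, which are the claimed values, provided admissible zero continued fractions exist. The hypothesis $s\ge 2$ is needed here: for $k=0$ the base $[2]$ itself admits no zero continued fraction of length $1$.

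\textbf{Step 3: existence, which is the main obstacle.} I will argue by induction along the construction, with base cases $[1,1]\le[2,2]$ for $k=1$ and $[1,1]\le[3,2]$, $[1,1]\le[2,3]$ for $k=0$. The difficulty is that the naive blow-ups of a zero continued fraction do not match the move on $b$. These blow-ups are $[1,a_1+1,a_2,\dots,a_s]$, $[a_1,\dots,a_s+1,1]$, and insertion of a $1$ between $a_i$ and $a_{i+1}$ with both neighbours raised by $1$. The move on $b$, however, raises the \emph{opposite} end, for example $[b_1+1,b_2,\dots,b_s,2]$. For instance, passing from $[1,1]\le[2,2]$ to $[3,2,2]$ requires $[2,1,2]$, which is the interior insertion and not an end blow-up.

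So I will strengthen the induction hypothesis: the admissible zero continued fraction should be chosen so that its slack $b_i-a_i$ (total $2-k$) sits at a controlled position, for example at an end entry. Then a suitable blow-down/blow-up, possibly a cyclic rotation of the triangulation through the hidden vertex $a_0$, produces an admissible zero continued fraction for the next chain. If tracking the slack directly proves awkward, the fallback is to verify the condition $[a'_1,\dots,a'_{s+1}]=0$ for an explicit candidate $a'$ using the matrix product $\prod_i\begin{pmatrix} a_i&-1\\ 1&0\end{pmatrix}$. Since the moves act only at the ends, this reduces to multiplying the old product by two explicit matrices on the left and right.
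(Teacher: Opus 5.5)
There is a genuine gap in your ``invariant part'': the identity $\sum_{i=1}^s a_i=3s-3$ is false. A triangulation of the $(s+1)$-gon $P_0P_1\cdots P_s$ has $s-1$ triangles. So $3(s-1)$ is the total count over \emph{all} $s+1$ vertices, including the hidden vertex $P_0$, and hence $\sum_{i=1}^s a_i=3s-3-v_0$ with $v_0\ge 1$. Already $[1,1]$ has sum $2$, and $[1,2,1]$, $[2,1,2]$ have different sums. So the admissible zero continued fractions of a fixed chain do not share a weight. For the dual $1$-Wahl chain $[2,3,2,3]$, reached via $[2,2]\to[3,2,2]\to[2,3,2,3]$, they are $[1,2,2,1]$, $[1,3,1,2]$, $[2,2,1,3]$, of weights $3,2,1$ in the paper's normalization $\lambda+1=\sum_i(b_i-a_i)$. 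You also dropped this $-1$: in your normalization $[1,1]\le[2,2]$ has weight $2$, contradicting your own $w([2,2])=1$. The two slips happen to cancel when $v_0=1$, which is why your target values look right. Consequently the step ``$\alpha=w$ as soon as one admissible zero continued fraction exists'' fails. For instance, $[1,2,\dots,2,1]$ is always admissible, but it has $v_0=s-1$ and is not minimal for these chains once $s\ge 3$.

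What the count actually gives is $\lambda=\sum_i b_i-3s+2+v_0\ge 2-k$, with equality exactly when $v_0=1$; your Step 2 computation $\sum_i b_i-3s=-(k+1)$ is correct. So existence alone is not enough: you must produce an admissible zero continued fraction whose hidden vertex is an ear. That is the missing idea, and your Step 3 does not supply it. It remains a sketch, and the invariant you float there (slack at an end entry) is not preserved: the unique weight-$1$ choice $[2,2,1,3]\le[2,3,2,3]$ has slack $(0,1,1,0)$.

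With $v_0=1$ as the induction hypothesis the step is immediate, and this is how the paper argues. Glue a new triangle onto the edge $P_sP_0$ (resp.\ $P_0P_1$) and make the new vertex the hidden one. This turns $[a_1,\dots,a_s]\le[b_1,\dots,b_s]$ into $[2,a_1,\dots,a_{s-1},a_s+1]\le[2,b_1,\dots,b_{s-1},b_s+1]$ (resp.\ $[a_1+1,a_2,\dots,a_s,2]\le[b_1+1,b_2,\dots,b_s,2]$). The slack vector is merely shifted, and the new hidden count is again $1$. The bases $[1,1]\le[2,2]$, $[1,1]\le[3,2]$, $[1,1]\le[2,3]$ all have $v_0=1$. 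Repaired in this way, your counting argument gives the lower bound in (i) and (ii) uniformly. That is a little cleaner than the paper's appeal to Urz\'ua's $T$-chain criterion for the lower bound in (i).
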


In particular, dual \(1\)-Wahl chains give rise to extremal P-resolutions for cyclic quotient singularities corresponding to $1$-Wahl chains.
In this way, cyclic quotient singularities associated with $k$-Wahl chains may be viewed as lying at the intersection of number theory, representation theory, and deformation theory.

The organization of this paper is as follows. 
In Section~2, we review the definition of $k$-Wahl chains and their duals.
In Section~3, we introduce the length encoding of $k$-Wahl chains and compare it with the special representations of the corresponding cyclic group, proving the representation-theoretic characterization above. 
In Section~4, we study zero continued fractions and their weights, and we investigate how dual $k$-Wahl chains give rise to $P$-resolutions. 
This provides a deformation-theoretic interpretation of the class of cyclic quotient singularities defined by $k$-Wahl chains.
\section{$k$-Wahl chains and cyclic quotient singularities}\label{sec:GMsing}

In this section, we explain cyclic quotient singularities associated with $k$-Wahl chains.
Let $a,b,r$ be positive integers and let $\varepsilon$ be a primitive $r$-th root of unity.
Let $G$ be the cyclic group generated by the diagonal matrix$
\begin{pmatrix}
\varepsilon^{a} & 0\\
0 & \varepsilon^{b}
\end{pmatrix}.$
We denote this generator (and also the group) by $\frac{1}{r}(a,b)$.
Then $G$ acts on $\mathbb{C}^{2}$ by $(x,y)\mapsto (\varepsilon^{a}x,\varepsilon^{b}y)$, and the quotient
$\mathbb{C}^{2}/G$ is called a \emph{cyclic quotient singularity of type}
$\frac{1}{r}(a,b)$.

Before stating properties of $k$-Wahl chains, we recall the relation between
two-dimensional cyclic quotients and continued fractions.
Let $G_{r,a}=\frac{1}{r}(1,a)$ with $\gcd(r,a)=1$.
The \emph{Hirzebruch--Jung} continued fraction is
\[
\frac{r}{a}=[b_{1},\dots,b_{\ell}]
= b_{1}-\cfrac{1}{b_{2}-\cfrac{1}{\ddots-\cfrac{1}{b_{\ell}}}},
\qquad b_{i}\in \mathbb{Z}_{\ge 2}.
\]
Then $-b_{1},\dots,-b_{\ell}$ are the self-intersection numbers of the exceptional curves
$E_{1},\dots,E_{\ell}$ in the minimal resolution of $\mathbb{C}^{2}/G_{r,a}$.

Motivated by this, we introduce the following special class of Hirzebruch--Jung fractions.

\begin{definition}\upshape \label{def:kWahl}
Let $k$ be a non-negative integer. A \emph{$k$-Wahl chain} is defined inductively by:
\begin{enumerate}
\item $[k+2]$ is a $k$-Wahl chain.
\item If $[b_{1},\dots,b_{\ell}]$ is a $k$-Wahl chain, then so are
\[
[b_{1}+1,b_{2},\dots,b_{\ell},2]
\qquad\text{and}\qquad
[2,b_{1},\dots,b_{\ell-1},b_{\ell}+1].
\]
\end{enumerate}
\end{definition}

\begin{example}\upshape \label{ex:kWahl}
When $k=0$, the fractions $[2]$, $[3,2]$, $[4,2,2]$, and $[2,3,3]$ are $0$-Wahl chains.
Moreover, if $[b_{1},\dots,b_{\ell}]$ is a $k$-Wahl chain, then $[b_{\ell},\dots,b_{1}]$ is also a $k$-Wahl chain.
\end{example}

\begin{remark}\upshape \label{rem:Tsing}
The notion of $k$-Wahl chains originally appeared (for $k=2$) in the work of Wahl \cite{Wahl},
Koll\'ar, and Shepherd-Barron \cite{KSB}. In particular, the cyclic quotient singularities determined by
$2$-Wahl chains are Wahl singularities, which belong to the class of $T$-singularities.
A two-dimensional quotient singularity is called a \emph{$T$-singularity} if it admits a
$\mathbb{Q}$-Gorenstein smoothing.
Moreover, if a quotient singularity is a $T$-singularity, then it is either a rational double point
or a cyclic quotient singularity of type $\frac{1}{dm^{2}}(1,adm-1)$ with $\gcd(a,m)=1$ and $m>1$.
The case $d=1$ (and not a rational double point) is called a \emph{Wahl singularity}.
\end{remark}


\begin{theorem}[\cite{GMS}]\label{thm:GMimpliesWahl}
Let $t\in(0,1)$ be a reduced fraction, and let $m_{k,t}$ be the associated $k$-GM number.
Let $u^{+}_{k,t}$ be the corresponding characteristic number.
Then the Hirzebruch--Jung continued fraction $\frac{m_{k,t}}{u^{+}_{k,t}}$
is a $k$-Wahl chain.
\end{theorem}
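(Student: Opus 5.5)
This theorem is cited from \cite{GMS}, and the paper has not yet defined $m_{k,t}$ or $u^{+}_{k,t}$. The plan below therefore assumes the standard construction of $k$-GM numbers via the Farey-type (Stern--Brocot) tree. Each reduced fraction $t\in(0,1)$ is obtained from the boundary fractions $0/1$ and $1/1$ by repeated mediants. The triple $(m_{k,t_1},m_{k,t},m_{k,t_2})$, with $t=t_1\oplus t_2$, is a $k$-generalized Markov triple. The characteristic number $u^{+}_{k,t}$ satisfies a congruence such as $u^{+}_{k,t}\,m_{k,t_1}\equiv \pm m_{k,t_2}$ (suitably adjusted by $k$) modulo $m_{k,t}$, with $0<u^{+}_{k,t}<m_{k,t}$. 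The strategy is induction along the Stern--Brocot tree, showing that each step $t\mapsto t_1\oplus t$ or $t\mapsto t\oplus t_2$ corresponds to one of the two moves of Definition~\ref{def:kWahl}.

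\textbf{Base case.} For the root fraction $t=1/2$, I would check directly that $m_{k,1/2}=k+2$ and $u^{+}_{k,1/2}=1$, so that $\frac{m}{u}=[k+2]$. This is the initial $k$-Wahl chain; the exact root depends on the normalization in \cite{GMS}.

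\textbf{Matrix formulation.} I would encode a continued fraction by the product
\[
M([b_1,\dots,b_\ell])=\prod_{i}\begin{pmatrix} b_i & -1\\ 1 & 0\end{pmatrix},
\]
whose first column is $(r,a)^{T}$ when $r/a=[b_1,\dots,b_\ell]$. In this form the two moves $[b_1+1,\dots,b_\ell,2]$ and $[2,b_1,\dots,b_\ell+1]$ become explicit matrix identities. Left multiplication by $\begin{pmatrix}1&0\\1&1\end{pmatrix}$-type elementary matrices raises $b_1$ by one, and appending a $2$ right-multiplies by a fixed matrix. I would similarly express the $k$-GM triples through Cohn-type matrices $C_{k,t}\in SL(2,\mathbb{Z})$. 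Along the tree these satisfy the multiplicative rule $C_{k,t_1\oplus t_2}=C_{k,t_1}C_{k,t_2}$, possibly twisted by a fixed matrix depending on $k$, and their entries recover $m_{k,t}$ and $u^{+}_{k,t}$.

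\textbf{Inductive step and main obstacle.} The main step is to show that passing from $C_{k,t}$ to a child in the tree multiplies the continued-fraction matrix of $m_{k,t}/u^{+}_{k,t}$ by exactly the matrices realizing one of the two moves. Concretely, the new pair $(m',u')$ should satisfy
\[
m'/u'=[b_1+1,\dots,b_\ell,2] \quad\text{or}\quad m'/u'=[2,b_1,\dots,b_\ell+1].
\]
Here I expect two difficulties. First, one move changes the chain by a single entry, while the Markov tree jumps by combining two earlier triples. To reconcile this, I would strengthen the induction hypothesis to track a pair of chains, for the two neighbours $t_1$ and $t_2$. I would prove that the chain for $t_1\oplus t_2$ is obtained from these by a concatenation rule, and then check that this concatenation coincides with an iterate of the Wahl moves. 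Second, I must verify that the normalization $0<u^{+}<m$ is preserved, and that the reversal symmetry of Example~\ref{ex:kWahl} matches the choice of $u^{+}$ rather than $u^{-}$. I would try the concatenation approach first and test it on small cases such as $k=0$ ($[3,2]$, $[2,3,3]$) to fix the signs.
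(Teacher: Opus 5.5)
The paper gives no proof of this statement; it is quoted from \cite{GMS}. So your plan has to stand on its own, and its central step is false. A Stern--Brocot step $t\mapsto t_1\oplus t$ or $t\mapsto t\oplus t_2$ does \emph{not} change the chain of $m_{k,t}/u^{+}_{k,t}$ by one of the two moves of Definition~\ref{def:kWahl}, nor even by an iterate of them.

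For $k=0$, the number $29$ comes from the Farey pair $\{5,2\}$, with parent $5$. One move from $[3,2]$ or $[2,3]$ only reaches $r=10$ or $r=13$. The chain $29/12=[3,2,4,2]$ is reached as $[2,3]\to[2,2,4]\to[3,2,4,2]$, passing through the non-Markov number $10$.

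It is worse for $194$, which comes from the pair $\{13,5\}$ with parent $13$, and $194/75=[3,3,2,4,3,2]$. A $k$-Wahl chain of length $\ge 2$ has exactly one end equal to $2$, so its parent is unique. The ancestors of this chain are $[2,3,2,4,3]$, $[3,2,4,2]$, $[2,2,4]$, $[2,3]$, $[2]$, with $r=73,29,10,5,2$; for the reversed orientation they are the reversals. Neither chain of $13$, $[3,3,2]$ or $[2,3,3]$, occurs. Hence no sequence of moves leads from the chain of $13$ to that of its Stern--Brocot child.

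For $k=1$, the triple $(1,13,61)$ gives $61/13=[5,4,2,2,2]$, whose ancestors are $[4,4,2,2],[3,4,2],[2,4],[3]$. So $61$ descends from neither Farey neighbour.

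Your base case is also off. With $m_{k,0/1}=1$ and $m_{k,1/1}=k+2$ one gets $m_{k,1/2}=2k^2+6k+5$ and $(2k^2+6k+5)/(k+2)=[2k+3,(2)^{k+1}]$. This is already $k+1$ moves away from $[k+2]$. In a normalization where $m_{k,1/2}=k+2$, the same jump simply happens one level lower.

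So everything rests on the ``concatenation rule'' you mention as a fallback. You do not formulate it, and it is where the entire content of the theorem lies. Write $W_t$ for the suitably oriented chain of $m_{0,t}/u^{+}_{0,t}$. For $k=0$ the small cases fit a gluing $W_{t_1\oplus t_2}=[W_{t_1},4,W_{t_2}]$, with $3$ in place of $4$ next to the empty chain of $m=1$:
\begin{itemize}
\item $29/12=[3,2,4,2]$ comes from $[3,2]$ and $[2]$;
\item $194/75=[3,3,2,4,3,2]$ comes from $[3,3,2]$ and $[3,2]$;
\item $13/5=[3,3,2]$ comes from $\varnothing$ and $[3,2]$.
\end{itemize}
Even granting such a rule, you would still need a lemma controlling how end-peeling (delete an end $2$, lower the opposite end by $1$) acts on a glued chain $[A,4,B]$. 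The sequence $194\to 73\to 29\to 10\to 5$ shows this is not a routine induction. Peeling passes through non-Markov numbers, and through $29$, which is not a Farey neighbour of $194$, so it does not follow the Farey tree.

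For $k\ge 1$ the chains are not even block concatenations of the neighbours' chains. For instance, $13/3=[5,2,2]$ contains no block $[3]$, although its Farey neighbours carry $1$ and $3=[3]$.

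What is missing is an explicit description of the chain of $m_{k,t}/u^{+}_{k,t}$, for example from generalized Cohn matrices or the Christoffel word of $t$. You would also need a direct proof that this explicit chain peels down to $[k+2]$.
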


 The relationship between Wahl singularities and the Markov equation, namely the $0$-GM equation, has been studied by Hacking and Prokhorov~\cite{HP}, Perling~\cite{Per}, and Urz\'ua and Z\'u\~niga~\cite{UZ}. These previous works consider the weighted projective surface $\PP(a^2,b^2,c^2)$ associated with an ordinary Markov triple, that is, a $0$-GM triple $(a,b,c)$, and it is known that this weighted projective surface has only Wahl singularities.  On the other hand, Theorems~\ref{thm:GMimpliesWahl} and \ref{thm:square} show that the cyclic quotient singularities associated with the squares of $0$-GM triples are Wahl singularities.

\begin{theorem}[\cite{GM}]\label{thm:square}
If $(a,b,c)$ is a $0$-GM triple, then $(a^{2},b^{2},c^{2})$ is a $2$-GM triple.
Conversely, if $(A,B,C)$ is a $2$-GM triple, then $(\sqrt{A},\sqrt{B},\sqrt{C})$ is a $0$-GM triple.
\end{theorem}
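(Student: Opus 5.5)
The plan is to prove both directions by a direct computation for the forward implication and Vieta jumping (descent on the tree of solutions) for the converse. The key observation is that the $2$-GM equation factors: for $k=2$ it reads
\[
x^{2}+y^{2}+z^{2}+2(xy+yz+zx)=9xyz,\qquad\text{i.e.}\qquad (x+y+z)^{2}=9xyz .
\]

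\emph{Forward direction.} Let $(a,b,c)$ satisfy $a^{2}+b^{2}+c^{2}=3abc$. Squaring gives $(a^{2}+b^{2}+c^{2})^{2}=9a^{2}b^{2}c^{2}$. This is exactly the factored $2$-GM equation evaluated at $(a^{2},b^{2},c^{2})$, so the forward direction is immediate.

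\emph{Converse: descent.} Let $(A,B,C)$ be a positive integer solution of $(A+B+C)^{2}=9ABC$. Viewed as a quadratic in $A$,
\[
A^{2}+(2B+2C-9BC)A+(B+C)^{2}=0 .
\]
Its second root is therefore
\[
A'=9BC-2B-2C-A=\frac{(B+C)^{2}}{A},
\]
which is a positive integer. So the mutation $(A,B,C)\mapsto(A',B,C)$ preserves the set of positive solutions, and similarly in the other two coordinates.

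Order the triple so that $A\ge B\ge C$. If $A'<A$, mutate and repeat. Since the sum of the entries strictly decreases, we eventually reach a triple with $A'\ge A$, i.e.\ $A\le B+C$. For such a triple,
\[
9ABC=(A+B+C)^{2}\le 4(B+C)^{2}\le 16B^{2},
\]
so $9AC\le 16B\le 16A$, which forces $C=1$. Then $A\in\{B,B+1\}$:
\begin{itemize}
\item[] if $A=B$, the equation becomes $(2B+1)^{2}=9B^{2}$, giving $B=1$;
\item[] if $A=B+1$, it becomes $4(B+1)=9B$, which has no integer solution.
\end{itemize}
Hence every positive solution reduces by mutations to $(1,1,1)$.

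\emph{Converse: squares are preserved.} We induct up the mutation tree from $(1,1,1)=(1^{2},1^{2},1^{2})$, which comes from the Markov triple $(1,1,1)$. Suppose $(A,B,C)=(a^{2},b^{2},c^{2})$ with $(a,b,c)$ a positive Markov triple. Let $a'=3bc-a$ be the Markov mutation; then $a'=(b^{2}+c^{2})/a>0$. This gives
\[
a'^{2}=\frac{(b^{2}+c^{2})^{2}}{a^{2}}=\frac{(B+C)^{2}}{A}=A'.
\]
So the mutated triple is again the square of a Markov triple. Each descent step is reversible by the same involution, so every $2$-GM triple is coordinatewise the square of a $0$-GM triple. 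Consequently $(\sqrt{A},\sqrt{B},\sqrt{C})$ is a $0$-GM triple.

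The main obstacle is the termination analysis of the descent, namely showing that the only fixed point with $A\le B+C$ is $(1,1,1)$. The inequality chain above is what I would rely on there, together with some care about ties among $A,B,C$ when ordering the triple.
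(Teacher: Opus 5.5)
The paper gives no proof of this statement. It is quoted from Gyoda--Matsushita \cite{GM}, so there is no in-paper argument to compare yours against.

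Your proof is correct and self-contained. The individual steps all hold:
\begin{itemize}
\item For $k=2$ the equation is $(x+y+z)^2=9xyz$, so the forward direction is just squaring the Markov equation.
\item The Vieta partner $A'=9BC-2B-2C-A=(B+C)^2/A$ is a positive integer, and the mutation is an involution.
\item The stopping analysis is sound. From $A\le B+C$ you get $9AC\le 16B\le 16A$, so $C=1$; then $A\in\{B,B+1\}$, and only $(1,1,1)$ survives. Ties cause no trouble, because you only use that $A$ is a maximal entry.
\item The identity $(3bc-a)^2=(b^2+c^2)^2/a^2=(B+C)^2/A$ shows that squaring intertwines the Markov involutions with the $2$-GM involutions. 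Transporting along the reversed descent path from $(1,1,1)$ therefore exhibits $(A,B,C)$ as the square of a Markov triple.
\end{itemize}

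Compared with the bare citation, your route gives an elementary proof that uses only the $k=2$ factorization. It also makes clear why the converse needs a descent at all. The equation by itself only shows that $ABC=\bigl((A+B+C)/3\bigr)^2$ is a perfect square, and that does not force each entry to be a square without extra structural information about the triple.
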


\begin{remark}\upshape
The converse of Theorem \ref{thm:GMimpliesWahl} does not hold in general: for example,
$\frac{10}{3}=[4,2,2]$ is a $0$-Wahl chain, but $10$ is not a $0$-GM number.
\end{remark}

\begin{proposition}\label{prop:kwahldual}
Let 
$\frac{r}{a} = [b_1, \dots, b_s]$ be a $k$-Wahl chain for some non-negative integer $k$. Then we have
\[
\frac{r}{\,r-a-k\,} = [b_s, \dots, b_1].
\]
In particular, it follows that $
a(r-a-k) \equiv 1 \pmod{r}$.
\end{proposition}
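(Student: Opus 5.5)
We argue by induction on the length $s$, following the inductive construction in Definition~\ref{def:kWahl}. We use the standard matrix description of Hirzebruch--Jung continued fractions. Put
\[
M(b)=\begin{pmatrix} b & -1\\ 1 & 0\end{pmatrix},\qquad M(b_1)\cdots M(b_s)=\begin{pmatrix} p & -q\\ p' & -q'\end{pmatrix}.
\]
Then $[b_1,\dots,b_s]=p/p'$ and $[b_s,\dots,b_1]=p/q$. Since $\det=1$, we have $pq'-qp'=1$, hence $qp'\equiv -1$ and $q\,p'\equiv \ldots$ modulo $p$.

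The cleanest route is to prove by induction the following stronger statement. If $\frac{r}{a}=[b_1,\dots,b_s]$ is a $k$-Wahl chain, then the reversed chain has numerator $r$ and denominator $r-a-k$.

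\textbf{Base case.} For $[k+2]$ we have $r=k+2$ and $a=1$. The reversal is the same chain, with denominator $1=r-a-k$.

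\textbf{Inductive step.} By the symmetry noted in Example~\ref{ex:kWahl}, it suffices to treat one of the two operations together with its mirror. Suppose $[b_1,\dots,b_s]=r/a$ and $[b_s,\dots,b_1]=r/c$ with $c=r-a-k$.

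First consider the new chain $[b_1+1,b_2,\dots,b_s,2]$. We compute it from the matrix product using
\[
M(b_1+1)=M(b_1)+E_{11},\qquad M(2)=\begin{pmatrix}2&-1\\1&0\end{pmatrix}.
\]
Write $r'=[b_2,\dots,b_s]$-numerator $=a$ and let $a'$ denote the corresponding next entry. Appending $2$ on the right sends the pair (numerator, numerator of the tail) to $(2r-c_{\ast},\dots)$. It is better to work with the reversed viewpoint: reversing swaps appending on the right with prepending on the left. Prepending $2$ to a chain $x/y$ gives $(2x-y)/x$, and adding $1$ to the first entry sends $x/y$ to $(x+y)/y$.

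Now take the reversed old chain $r/c=[b_s,\dots,b_1]$. Adding $1$ to its last entry $b_1$ changes numerator and denominator additively by the numerator and denominator of $[b_s,\dots,b_2]$. These quantities are $q$ and $q'$ from the matrix, with $c=q$ and $a=p'$.

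\textbf{Explicit formulas.} Carrying this through, the new chain $[b_1+1,\dots,b_s,2]$ has value
\[
R/A=\frac{2(r+a)-\ast}{r+a}\quad\text{type}.
\]
Concretely, one finds
\[
R=2r+a+c-\ldots,\qquad A=r+\ldots,
\]
and the reversed new chain $[2,b_s,\dots,b_1+1]$ has denominator $r+q'$-type. The plan is to compute these explicitly in terms of $p,q,p',q'$ and then verify the relation $R-A-k=C$, using the inductive hypothesis $c=r-a-k$ together with $pq'-qp'=1$.

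\textbf{Obstacle and the congruence.} The main obstacle is bookkeeping. The inductive hypothesis involves only $r,a,c$, while the new denominators involve $q'$. I expect the combination $R-A-C$ to reduce linearly to $r-a-c$, which equals $k$, so that $q'$ cancels. If it does not cancel, I will strengthen the induction hypothesis to track all four matrix entries, including the tail relation.

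Finally, the congruence follows from $[b_s,\dots,b_1]=p/q$ with $qp'\equiv 1\pmod p$, since $\det=1$ gives $pq'-qp'=1$ up to sign convention. Applying this with $p=r$, $p'=a$ and $q=r-a-k$ yields $a(r-a-k)\equiv1\pmod r$.
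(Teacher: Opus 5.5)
The matrix framework you set up is sound, and if completed it would give a cleaner argument than the paper's. The paper does the same induction but works with modular inverses $r'\equiv r^{-1}$, $a'\equiv a^{-1}$ and appeals to the ``basic property'' that a chain and its reversal share the numerator.

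However, the proposal stops exactly where the proof has to happen: the inductive step is never carried out. The ``explicit formulas'' consist of placeholders ($\ast$, $\ldots$, ``type''). You then close with ``I expect $R-A-C$ to reduce to $r-a-c$ \dots\ if it does not cancel, I will strengthen the hypothesis.'' That expectation is the entire content of the proposition, so as written nothing is proved.

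The fragments you do write down are also off:
\begin{itemize}
\item $(2(r+a)-\ast)/(r+a)$ is the value of the reversed new chain $[2,b_s,\dots,b_2,b_1+1]$, not of $[b_1+1,b_2,\dots,b_s,2]$.
\item The new reversed denominator is $r+a$, not $r+q'$.
\item $R$ contains $-c$, not $+c$.
\item Raising the last entry $b_1$ of $[b_s,\dots,b_1]$ shifts numerator and denominator by $p'=a$ and $q'$, not by $q$ and $q'$.
\item The determinant sign is wrong: $\det\begin{pmatrix}p&-q\\p'&-q'\end{pmatrix}=qp'-pq'=1$. Your $pq'-qp'=1$ would give $a(r-a-k)\equiv-1$, which fails e.g.\ for $[3]$.
\end{itemize}

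The missing step is a two-line computation in your own notation. Write $P=M(b_1)\cdots M(b_s)=\begin{pmatrix} r&-c\\ a&-q'\end{pmatrix}$, and note $M(b_1+1)=UM(b_1)$ with $U=\begin{pmatrix}1&1\\0&1\end{pmatrix}$. Then
\[
U\,P\,M(2)=\begin{pmatrix} 2r+2a-c-q' & -(r+a)\\ 2a-q' & -a\end{pmatrix},
\]
so $R=2r+2a-c-q'$, $A=2a-q'$, $C=r+a$, and $R-A-C=r-a-c$.

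For the other move, $M(b_s+1)=M(b_s)\begin{pmatrix}1&0\\-1&1\end{pmatrix}$. Computing $M(2)\,P\begin{pmatrix}1&0\\-1&1\end{pmatrix}$ gives $R=2r+2c-a-q'$, $A=r+c$, $C=2c-q'$, and again $R-A-C=r-a-c$. Alternatively, use reversal, noting that the statement is reversal-invariant since $r-c-k=a$.

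Thus ``numerator minus denominator minus reversed denominator'' is invariant under both moves and equals $k$ for $[k+2]$; $q'$ cancels as you hoped. You should also justify $[b_s,\dots,b_1]=p/q$, e.g.\ from $M(b)^{T}=DM(b)D$ with $D=\mathrm{diag}(1,-1)$.
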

\noindent

\begin{proof}
We argue by induction on the length of the Hirzebruch--Jung continued fraction.  
First, note that $\frac{k+2}{1} = [k+2]$
satisfies the claim, since $k+2-1-k = 1$.

Now suppose that $\frac{r}{a} = [b_1, \dots, b_s]$
is a $k$-Wahl chain and that the statement holds for this continued fraction. 
We consider the extension $[b_1+1, b_2, \dots, b_s, 2]$.
By the induction hypothesis we have
\[
[2, b_s, \dots, b_1] = 2-\frac{r-a-k}{r}
= \frac{r+a+k}{r}.
\]

From this, we deduce
\[
[b_1, \dots, b_s, 2] = \frac{r+a+k}{r'},
\]
where $r'$ denotes the integer such that $r'\cdot r \equiv 1 \pmod{r+a+k}$.
It leads to
\[
[b_1+1, b_2, \dots, b_s, 2] 
= 1 + [b_1, \dots, b_s, 2]
= \frac{r' + r + a + k}{r'}.
\]

On the other hand, since
\[
[1+b_1, b_2, \dots, b_s] 
= 1 + \frac{r}{a} 
= \frac{r+a}{a},
\]
we have
\[
[2, b_s, \dots, b_2, b_1+1] = 2-\frac{a'}{r+a}=\frac{2r+2a-a'}{r+a},
\]
where $a'$ denotes the integer satisfying $a' \cdot a \equiv 1 \pmod{r+a}$.
By the basic property of Hirzebruch--Jung continued fractions, we have $2r+2a-a' = r'+r+a+k.$
Hence
\[
[2, b_s, \dots, b_2, b_1+1]
=\frac{r'+r+a+k}{r+a}.
\]
It remains to check that the denominator is
\[
(r'+r+a+k)-r'-k = r+a,
\]
which is immediate. Therefore the claim follows.\\
A similar argument applies to $[2,b_1, \dots, b_{s-1}, b_s+1]$.
This completes the induction and proves the proposition.

\end{proof}

\begin{definition}\label{def:dualkwahlchainstwo}\upshape
Let $k$ be a non-negative integer. We define a \emph{dual $k$-Wahl chain} as follows.
\begin{itemize}\setlength{\leftskip}{-15pt}
\item [$(i)$] The Hirzebruch-Jung continued fraction with $k+1$ consecutive $2$s $[2,\dots,2]$ is a dual $k$-Wahl chain.
\item[$(ii)$] If $[b_1,\dots,b_l]$ is a dual $k$-Wahl chain, then $ [b_1+1,b_2,\dots,b_\ell,2]$ and $[2,b_1,\dots, b_{\ell-1},b_{\ell}+1]$ are also dual $k$-Wahl chains.
\end{itemize}
\end{definition}

\begin{remark}\upshape \label{rem:dualkwahl}
Any dual $k$-Wahl chain arises as the Hirzebruch--Jung dual of some $k$-Wahl chain. 
In other words, if $[b_1,\dots,b_{\ell}] = \frac{r}{a}$ is a $k$-Wahl chain, then $\frac{r}{r-a} = [a_1,\dots,a_s]$
is a dual $k$-Wahl chain.

In general, the Hirzebruch--Jung dual of a continued fraction does not coincide with its reversal. 
However, for a $0$-Wahl chain, they do coincide by Proposition~\ref{prop:kwahldual}: namely,
$
\frac{r}{r-a} = [b_{\ell},\dots,b_1].
$

\end{remark}

\section{Length encoding}
 In this section, we study the special representations associated with the cyclic groups determined by $k$-Wahl chains.
By the special McKay correspondence, these special representations are in one-to-one correspondence with the exceptional curves appearing in the minimal resolution of the corresponding cyclic quotient singularities.
Using this geometric correspondence, we introduce the relationship between the length encoding of a $k$-Wahl chain and that of the special representations.
\subsection{Special representations}

Let $G_{r,a} = \tfrac{1}{r}(1,a)$ be a finite cyclic group, and let 
$g = \tfrac{1}{r}(1,a)$ denote its generator. 
For each integer $i$, we define the character $\chi_i : g \mapsto \varepsilon^i$, 
where $\varepsilon$ is a primitive $r$-th root of unity, and let $\rho_i$ 
be the one-dimensional representation associated to $\chi_i$. 
Since $G_{r,a}$ is abelian, all its irreducible representations are one-dimensional. 
Among these irreducible representations, we now introduce the notion of 
\emph{special representations}.
Originally, special representations were defined for small finite subgroups of 
${\rm GL}(2,\CC)$ (see \cite{Rie}, \cite{Wun1}). For the purposes of this work, we shall restrict the definition to finite cyclic groups.

\begin{definition}\upshape
Let $G$ be a finite cyclic group and $\rho$ a one-dimensional representation of $G$ with representation space $V_\rho$.
We say that $\rho$ is a nontrivial \emph{special representation} of $G$ if 
$(\CC[x,y]\otimes V_\rho)^G$ is generated by two elements as a module over $\CC[x,y]^G$.
\end{definition}

There is a one-to-one correspondence between nontrivial special representations 
and the exceptional curves appearing in the minimal resolution of the 
cyclic quotient singularity $\CC^2/G$. 
This correspondence is known as the \emph{special McKay correspondence} 
(\cite{Wun2}).

A monomial \(x^m y^n\) in $\CC[x,y]$ transforms by the character \(\chi_{m+an}\).
Equivalently, the weight of \(x^m y^n\) is
\[
\wt(x^m y^n)\equiv m+an \pmod r.
\]
In particular, the monomials \(x^i\) and \(y^j\) define the same character if and only if $i\equiv aj \pmod r$.
Therefore, if an exceptional curve is described by a monomial ratio $x^i:y^j$,
then the corresponding special representation is $\rho_i$.

From the toric point of view, the cyclic quotient singularity
\(\mathbb{C}^2/G_{r,a}\) is the affine toric surface associated with the cone
\(\sigma\subset N_\mathbb{R}\), where
\[
N=\mathbb{Z}^2+\frac{1}{r}(1,a)\mathbb{Z} \ , \textrm{and} \  \sigma=\mathbb{R}_{\geq 0}(1,0)+\mathbb{R}_{\geq 0}(0,1).
\]
The minimal resolution of $\compl{2}/G_{r,a}$ is obtained by a simplicial subdivision whose rays pass through the lattice points on the Newton boundary, and each lattice point on the Newton boundary
corresponds to an exceptional divisor.
If \(i\equiv aj \pmod r\), then we have $\frac{1}{r}(j,i)\in N $.
Moreover, this lattice point is precisely the toric point corresponding to the
exceptional divisor associated with the special representation \(\rho_i\).

\begin{remark}\upshape \label{rem:SR-remainders}
Let $\frac{r}{a}=[b_1,\dots,b_s]$
be the Hirzebruch--Jung continued fraction of the cyclic quotient singularity of type
\(\frac{1}{r}(1,a)\).
The integers \(a_0,\dots,a_{s+1}\) are defined by
\[
a_0:=r,\qquad a_1:=a,\qquad a_{i+1}:=b_i a_i-a_{i-1}\ \ (i=1,\dots,s).
\]
Then
\[
r=a_0>a_1>\cdots>a_s=1>a_{s+1}=0,
\]
and the indices of the special representations are precisely these intermediate
remainders:
\[
\rho_{a_1},\rho_{a_2},\dots,\rho_{a_s}.
\]
In particular, the subscripts of the special representations are not the coefficients
\(b_i\) of the Hirzebruch--Jung continued fraction, but the remainder sequence
associated with it.
\end{remark}

\begin{definition}\upshape
Let $G_{r,a} = \tfrac{1}{r}(1,a)$ be a cyclic group. Denote by 
$\rho_{a_1}, \dots, \rho_{a_s}$ the special representations of $G_{r,a}$, 
where $r > a_1 > a_2 > \cdots > a_s > 0$. 
For each $i = 1, \dots, s-1$, set $d_i := a_i - a_{i+1}$.  

The \emph{special representation length encoding} (LESR) of $G_{r,a}$ is defined 
as the sequence obtained from $(d_1, \dots, d_{s-1})$ by replacing each 
maximal block of consecutive equal integers with its length.  
In particular, when $G_{r,a}$ admits only one special representation, that is $G_{r,1}$, we set 
$\mathrm{LESR}(G_{r,a}) = \varnothing$.
\end{definition}

\begin{example}\label{ex29}
Let $G=G_{29,12}$. Then the special representations of $G_{29,12}$ are 
$\rho_{12}, \rho_{7}, \rho_{2}, \rho_{1}$, corresponding to the monomial ratios 
$x^{12}:y$, $x^{7}:y^3$, $x^{2}:y^5$, and $x:y^{17}$, respectively. 
These special representations correspond bijectively to the exceptional curves 
$\mathbb{P}^1$ in the minimal resolution of $\mathbb{C}^2/G_{29,12}$.

From this we obtain the sequence 
\[
d_1 = 12-7 = 5, \quad d_2 = 7-2 = 5, \quad d_3 = 2-1 = 1.
\]
Hence the sequence $(d_1,d_2,d_3) = (5,5,1)$.  
Applying length encoding, we compress this into 
\[
\mathrm{LESR}(G_{29,12}) = (2,1).
\]

\end{example}

Note that, by definition of LESR, reversing the order of the continued fraction 
also reverses its length encoding. Namely, if the LESR of 
$[b_1, \dots, b_s]$ is $(c_1, \dots, c_m)$, then the LESR of 
$[b_s, \dots, b_1]$ is $(c_m, \dots, c_1)$.

\begin{lemma}\label{lem:LErule}
Let $\frac{r}{a}=[b_1,\dots,b_s]$, and write the special representations of $G_{r,a}$ as 
 $\rho_{a_1},\dots,\rho_{a_s}$, where $a_1=a$ and  $a_s=1$. For each $i=0,\dots,s$, set $d_i=a_{i}-a_{i+1}$, where $a_0=r, a_{s+1}=0$. Then $b_i=2$ if and only if $d_i-d_{i-1}=0$ for $i=1,\dots, s$.
\end{lemma}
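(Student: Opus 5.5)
The plan is to compute $d_i-d_{i-1}$ directly from the recursion in Remark~\ref{rem:SR-remainders}, $a_{i+1}=b_i a_i-a_{i-1}$ for $i=1,\dots,s$, with $a_0=r$, $a_1=a$, $a_s=1$, $a_{s+1}=0$. For each $i$ with $1\le i\le s$ we have
\[
d_i-d_{i-1}=(a_i-a_{i+1})-(a_{i-1}-a_i)=2a_i-a_{i+1}-a_{i-1}.
\]
Substituting $a_{i+1}+a_{i-1}=b_i a_i$ gives
\[
d_i-d_{i-1}=(2-b_i)\,a_i .
\]
Both $d_{i-1}$ and $d_i$ are defined because $d_j$ is defined for $j=0,\dots,s$, and the index range $i=1,\dots,s$ is exactly where the recursion holds.

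Next we use positivity. By Remark~\ref{rem:SR-remainders}, $a_i\ge a_s=1>0$ for all $1\le i\le s$. Hence $d_i-d_{i-1}=0$ if and only if $b_i=2$, which proves both directions. As a byproduct, $b_i\ge 2$ gives $d_i\le d_{i-1}$, with strict inequality exactly when $b_i\ge 3$. This monotonicity is what makes the run-length encoding in the later sections meaningful.

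There is no real obstacle here. The one point to check is the boundary behaviour. At $i=1$ the recursion uses $a_0=r$, and $d_0=r-a$ is the "virtual" difference. At $i=s$ it uses $a_{s+1}=0$, and $d_s=1$. Both cases fit the same identity, so no separate case split is needed. It also helps to note in passing that the strict decrease $a_0>\dots>a_{s+1}$ is what guarantees $a_i\neq 0$ in the relevant range.
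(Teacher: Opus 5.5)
Your proof is correct and is essentially the same as the paper's. Both use the remainder recursion $a_{i+1}+a_{i-1}=b_ia_i$ to write $d_i-d_{i-1}=(2-b_i)a_i$ and then conclude from $a_i>0$ for $1\le i\le s$.
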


\noindent
\proofname 
By Remark \ref{rem:SR-remainders}, we have $b_i\cdot a_i=a_{i+1}+a_{i-1}$ for $i=1,\dots,s$. Hence
\[
d_i-d_{i-1}
=(a_i-a_{i+1})-(a_{i-1}-a_i)
=2a_i-a_{i+1}-a_{i-1}
=(2-b_i)a_i.
\]
Since $a_i>0$, the claim holds.
\qed

As a consequence of this lemma, we obtain the following.

\begin{corollary}\label{cor:LESR2}
   Assume that the LESR of $[b_1,\dots, b_s]$ is $(c_1,\dots,c_m)$. Then the LESR of $[b_1,\dots, b_s,2]$ is as follows:
\begin{itemize}
  \item If $b_s=2$, then $(c_1,\dots, c_{m-1}, c_m+1)$,
  \item if $b_s \geq 3 $, then $ (c_1,\dots, c_{m-1}, c_m,1)$.
\end{itemize}

\end{corollary}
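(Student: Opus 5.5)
The plan is to compare the remainder sequences of $[b_1,\dots,b_s]$ and $[b_1,\dots,b_s,2]$ and then apply Lemma~\ref{lem:LErule}. Write $\frac{r}{a}=[b_1,\dots,b_s]$ with remainders $a_0=r,\ a_1=a,\dots,a_s=1,\ a_{s+1}=0$. Put $\frac{r'}{a'}=[b_1,\dots,b_s,2]$ with remainders $a'_0,\dots,a'_{s+2}$. Running the recursion $a'_{i+1}=b_ia'_i-a'_{i-1}$ backwards from $a'_{s+1}=1$, $a'_{s+2}=0$ gives $a'_s=2$. Since the recursion is linear and determined by its last two terms, we get
\[
a'_i = 2a_i - a_{i+1}\qquad (0\le i\le s),
\]
equivalently $a'_i=a_i+d_i$. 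The $d$-sequence for the extended chain therefore consists of the values $d'_i=a'_i-a'_{i+1}$ for $i=1,\dots,s$, which is one more entry than before.

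Rather than comparing the $d'$ values directly, I will use Lemma~\ref{lem:LErule}. It says that for any chain, the difference $d_i-d_{i-1}$ vanishes exactly when $b_i=2$. Consequently, for $i=2,\dots,s-1$, consecutive entries $d_{i-1},d_i$ of the LESR sequence are equal exactly when $b_i=2$. The sequence $(d_1,\dots,d_{s-1})$ thus has its block boundaries at those $i\in\{2,\dots,s-1\}$ with $b_i\ge 3$. Hence the LESR depends only on the positions of the entries $\ge 3$ among $b_2,\dots,b_{s-1}$: it is the list of gaps between consecutive such positions, counted within $\{1,\dots,s-1\}$. One caveat is that consecutive blocks could in principle have equal values. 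This cannot happen, because $d$ is nondecreasing (its differences are $(2-b_i)a_i\le 0$ in the index direction, so it is in fact monotone) and strictly changes at each $b_i\ge3$. So maximal runs of equal values coincide with the blocks cut by the $b_i\ge3$.

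Now apply this description to $[b_1,\dots,b_s,2]$. Its $d'$-sequence has indices $1,\dots,s$, and its internal cut positions are the $i\in\{2,\dots,s\}$ with $b_i\ge3$, where the new last entry equals $2$ and lies outside the range. Compared with the original chain, exactly one index is added at the end, and one new possible cut appears at position $s$, which is a cut precisely when $b_s\ge 3$. If $b_s=2$, the added index merges into the last block, giving $(c_1,\dots,c_{m-1},c_m+1)$. If $b_s\ge3$, a new block of length $1$ appears, giving $(c_1,\dots,c_m,1)$.

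The main obstacle is the edge case of short chains: $s=1$, where the LESR is $\varnothing$, needs the convention that $\varnothing$ plus one new index gives $(1)$. I would check these cases and the indexing of Lemma~\ref{lem:LErule} by hand, since the lemma's $d_0=r-a$ lies outside the LESR range.
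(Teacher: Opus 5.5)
Your argument is correct and is essentially the paper's. Lemma~\ref{lem:LErule} places the block boundaries of $(d_1,\dots,d_{s-1})$ exactly at the interior indices with $b_i\ge 3$. The block lengths therefore depend only on $b_2,\dots,b_{s-1}$, and appending a $2$ adds one index whose boundary is decided by $b_s$. Your version also makes explicit a point the paper's one-line proof passes over: the differences $d'_i$ of the extended chain are new numbers, because the remainders change, yet their first $s-1$ entries have the same cut positions.

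One correction: the displayed identity $a'_i=2a_i-a_{i+1}$ (equivalently $a'_i=a_i+d_i$) is false. The shifted sequence $(a_{i+1})$ satisfies the recursion with coefficients $b_{i+1}$ rather than $b_i$, so it is not a second solution of the same recursion. For $[3,2,4,2]=29/12$ your formula gives $a'_0=46$, whereas $[3,2,4,2,2]=41/17$. The correct relation is $a'_i=2a_i-\tilde a_i$ for $0\le i\le s$, where $\tilde a_i$ are the remainders of $[b_1,\dots,b_{s-1}]$. Since the rest of your argument never uses this identity, simply delete it.

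Two smaller fixes. The sequence $d$ is nonincreasing, not nondecreasing. The monotonicity caveat is unnecessary in any case: run-length encoding only compares adjacent entries, and at each cut $d_{i-1}\neq d_i$ by the lemma.
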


\noindent
\proofname
If $b_s = 2$, then the last difference satisfies $d_s - d_{s-1} = 0$ by Lemma \ref{lem:LErule}. 
Hence the final block in the length encoding is extended, giving $(c_1,\dots,c_{m-1},c_m+1)$.
If instead $b_s \geq 3$, then $d_s - d_{s-1} \neq 0$, so a new block of length 
one appears at the end of the encoding  $ (c_1,\dots,c_{m-1},c_m,1)$.

\qed

\begin{lemma}\label{lemma:LESRsame}
  If the LESR of $[b_1,b_2, \dots, b_s]$ is $(c_1,\dots, c_m)$, then the LESR of $[b'_1, b_2, \dots, b_s]$ is also $(c_1,\dots, c_m)$.
\end{lemma}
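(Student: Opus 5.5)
The plan is to show that the sequence $(d_1,\dots,d_{s-1})$ entering the definition of the LESR does not change at all when $b_1$ is replaced by $b'_1$ (with $b'_1\geq 2$, so that $[b'_1,b_2,\dots,b_s]$ is still a Hirzebruch--Jung continued fraction). Once the sequences agree, their run-length encodings agree trivially.

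First, I will read the recursion of Remark~\ref{rem:SR-remainders} backwards. From $a_{i+1}=b_ia_i-a_{i-1}$ we get
\[
a_{i-1}=b_ia_i-a_{i+1}\qquad (i=s,s-1,\dots,1),
\]
with the initial values $a_{s+1}=0$ and $a_s=1$. Running this downward, $a_{s-1}=b_s$, then $a_{s-2}=b_{s-1}b_s-1$, and so on. By induction on $s-j$, the remainder $a_j$ is determined by $b_{j+1},\dots,b_s$ alone, for every $j=1,\dots,s$. In particular $a_1,\dots,a_s$ depend only on $b_2,\dots,b_s$. The entry $b_1$ enters only in the last step, computing $a_0=r=b_1a_1-a_2$.

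Second, let $r'=b'_1a_1-a_2$ be the numerator of $[b'_1,b_2,\dots,b_s]$. Its value is $\frac{r'}{a_1}$, with the same $a_1=a$, and $\gcd(r',a)=1$ because $a_1,a_2$ are consecutive remainders. So $[b'_1,b_2,\dots,b_s]$ corresponds to the group $G_{r',a}$. Its special representations are again $\rho_{a_1},\dots,\rho_{a_s}$, with exactly the same indices as for $G_{r,a}$. Hence the differences $d_i=a_i-a_{i+1}$ for $i=1,\dots,s-1$ coincide for the two groups. Note that $d_0=a_0-a_1$, the only difference that changes, is not used in the definition of the LESR. Consequently the LESR is the same, including the degenerate case $s=1$, where both are $\varnothing$.

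I do not expect a real obstacle. The only point needing care is the observation that the LESR uses only $d_1,\dots,d_{s-1}$ and not $d_0$. Equivalently, via Lemma~\ref{lem:LErule}, the block structure is governed by whether $b_i=2$ for $2\le i\le s-1$, a condition that never involves $b_1$.
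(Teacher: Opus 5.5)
Your proof is correct and follows essentially the same route as the paper. The paper writes $[b_2,\dots,b_s]=p/q$ and observes that the two remainder sequences are $b_1p-q,\,p,\,q,\dots,1,0$ and $b'_1p-q,\,p,\,q,\dots,1,0$. This is your observation that $a_1,\dots,a_s$ depend only on $b_2,\dots,b_s$, so that only $a_0$ (hence only $d_0$, which the LESR never uses) changes; your backward recursion and the side remark via Lemma~\ref{lem:LErule} are just more explicit versions of this.
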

\noindent
\proofname

Let $[b_2, \dots, b_s]=\dfrac{p}{q}$. Then we have
\[
[b_1,\dots, b_s ]=b_1-\dfrac{q}{p}=\dfrac{p \cdot b_1-q}{p}.
\]
Similarly, we have $ [b'_1, b_2, \dots, b_s]=\dfrac{p \cdot b'_1-q}{p}$. 
By Remark~3.2, the remainder sequences of these two continued fractions are
\[
b_1p-q,\ p,\ q,\ \dots,\ 1,\ 0
\quad\text{and}\quad
b'_1 p-q,\ p,\ q,\ \dots,\ 1,\ 0,
\]
respectively. Hence the indices of the special representations are $a_1=q,\ \dots,\ a_s=1$
in both cases. Therefore the corresponding difference sequences are identical, so the LESR are equal.

\qed

\subsection{Length encoding of $k$-Wahl chain}  
\begin{definition}\upshape \label{def:LEwahl}
  The \emph{length encoding} of a $k$-Wahl chain is defined as follows:
  \begin{itemize}
    \item[(i)] The empty sequence $\emptyset$ for $[k+2]$.
    \item[(ii)] Suppose that the sequence $(c_1,\dots,c_m)$ encodes a $k$-Wahl chain  $[a_1,\dots,a_s]$ with $a_1\neq 2$, then for any $c_{m+1} > 0$, $(c_1,\dots,c_{m+1})$ encodes the $k$-Wahl chain corresponding to 
 \[   [(2)^{c_{m+1}}, a_1, \dots , a_{s-1}, a_s+c_{m+1}], \]
      where $(2)^\ell$ denotes a string of $\ell$ consecutive $2$'s.
   \item[(iii)] Suppose that the sequence $(c_1,\dots,c_m)$ encodes a $k$-Wahl chain  $[a_1, \dots, a_s]$ with $a_s\neq 2$, then for any $c_{m+1} > 0$, $(c_1,\dots,c_{m+1})$ encodes the $k$-Wahl chain corresponding to
     $[a_1+c_{m+1}, a_2,\dots , a_{s}, (2)^{c_{m+1}}]$.
    \end{itemize}
  
\end{definition}

In other words, the length encoding of a $k$-Wahl chain records on which side successive 2’s are attached, starting from $[k+2]$. Moreover, the length encodings of $[b_1,\dots,b_s]$ and $[b_s,\dots,b_1]$ coincide, which does not affect the correspondence, since both represent the isomorphic cyclic quotient singularity.

\begin{example}\upshape
The length encoding of the $0$-Wahl chain $[4,2,2]$ is given by $(2)$.
The length encoding of $[2,4,2,3]$ is given by $(2,1)$, and that of $[3,2,4,2,4,2]$ is $(2,2,1)$.
\end{example}

\begin{theorem}
 Let $G=\tfrac{1}{r}(1,a)$ be a cyclic quotient singularity, where $r$ and $a$ are coprime.
 Suppose that the Hirzebruch–Jung continued fraction  
$r/a=[b_1,\dots,b_s]$ is a $k$–Wahl chain. 
Then the length encoding of $[b_1,\dots,b_s]$ agrees, up to reversal, 
with the length encoding of special representations {\rm (LESR)} of $G$. 
\end{theorem}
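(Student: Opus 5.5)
The plan is to turn LESR into a purely combinatorial invariant of the chain, and then induct along the construction in Definition~\ref{def:LEwahl}. By Lemma~\ref{lem:LErule}, for $2\le i\le s-1$ we have $d_{i-1}=d_i$ exactly when $b_i=2$. So the run-length encoding of $(d_1,\dots,d_{s-1})$ depends only on which of the interior entries $b_2,\dots,b_{s-1}$ equal $2$. The first entry $b_1$ plays no role, in agreement with Lemma~\ref{lemma:LESRsame}. In particular LESR is determined by the continued fraction alone, so we may speak of the LESR of any chain $[b_1,\dots,b_s]$, including the reversed one. By the remark after Example~\ref{ex29}, reversing the chain reverses its LESR. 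It therefore suffices to prove the following: if $(c_1,\dots,c_m)$ is the length encoding of a $k$-Wahl chain, then for the orientation in which the last block of $2$'s stands on the \emph{right}, the LESR equals $(c_1,\dots,c_m)$.

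We argue by induction on $m$, checking the first two cases directly. For $m=0$ the chain is $[k+2]$, and its LESR is $\varnothing$ by definition. For $m=1$ the chain, in the right-oriented form, is $[k+2+c_1,(2)^{c_1}]$. Here $s-1=c_1$, and all interior entries $b_2,\dots,b_{c_1}$ equal $2$, so $d_1=\dots=d_{c_1}$ and the LESR is $(c_1)$. We treat this case directly rather than through Corollary~\ref{cor:LESR2}, because that corollary presupposes a nonempty encoding. This direct check also covers the degenerate case $k=0$, where the starting chain $[2]$ already consists of a single $2$.

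For the inductive step, let $(c_1,\dots,c_m)$ encode $[a_1,\dots,a_s]$, with the new block attached according to rule (ii); rule (iii) is the mirror image. Since $a_1\neq 2$, we have $a_1\ge 3$. The new chain reversed is
\[
[a_s+c_{m+1},a_{s-1},\dots,a_1,(2)^{c_{m+1}}].
\]
By Lemma~\ref{lemma:LESRsame} its LESR equals that of $[a_s,\dots,a_1,(2)^{c_{m+1}}]$. By the induction hypothesis, one of the two orientations of $[a_1,\dots,a_s]$ has LESR $(c_1,\dots,c_m)$. We need this to be the orientation $[a_s,\dots,a_1]$, that is, the one with the last-added block of $2$'s on the right. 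This holds because rule (ii) is applicable only when $a_1\ne2$, which forces the previous block of $2$'s to sit at the right end.

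Now append the $c_{m+1}$ twos one at a time using Corollary~\ref{cor:LESR2}. The first append is made after the entry $a_1\ge3$, so it creates a new block $(c_1,\dots,c_m,1)$. Each further append follows an entry equal to $2$, so it increases the last block by one. The result is $(c_1,\dots,c_m,c_{m+1})$, which completes the induction.

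The main obstacle is the bookkeeping of orientation. We must make sure that the chain to which the induction hypothesis applies is the correctly reversed one, that the hypothesis $a_1\ne 2$ (resp.\ $a_s\neq2$) is exactly what puts a non-$2$ entry just before the appended block, and that the base cases (in particular $k=0$) do not break Corollary~\ref{cor:LESR2}. Having verified these, the statement ``up to reversal'' follows by combining the right-oriented claim with the reversal property of LESR.
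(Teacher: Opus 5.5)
\textbf{There is a genuine gap, and it sits in the orientation step of your induction, the point you flagged yourself.} Under rule (ii) you have $a_1\neq 2$. So for $m\ge 1$ the previous block of $2$'s sits at the right end of $[a_1,\dots,a_s]$ itself, not of $[a_s,\dots,a_1]$, whose right end is $a_1\ge 3$.

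\begin{itemize}
\item Your own induction hypothesis therefore gives $\mathrm{LESR}[a_1,\dots,a_s]=(c_1,\dots,c_m)$.
\item By the reversal property, $\mathrm{LESR}[a_s,\dots,a_1]=(c_m,\dots,c_1)$.
\item Lemma~\ref{lemma:LESRsame} and Corollary~\ref{cor:LESR2} then give $(c_m,\dots,c_1,c_{m+1})$ for the new chain, not $(c_1,\dots,c_m,c_{m+1})$. Rule (iii) fails in exactly the same way.
\end{itemize}

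So the right-oriented LESR $L_m$ actually satisfies $L_{m+1}=(\text{reverse of }L_m,\ c_{m+1})$. This gives $L_2=(c_1,c_2)$, but $L_3=(c_2,c_1,c_3)$ and $L_4=(c_3,c_1,c_2,c_4)$. In other words, the LESR lists the blocks of $2$'s in their spatial order along the chain, while the length encoding lists them in the order they were attached.

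\textbf{No bookkeeping can repair this, because the statement itself fails once $m\ge 3$.} Take $k=2$ and $\frac{144}{59}=[3,2,5,4,2]$, a Wahl singularity ($144=12^2$, $59=5\cdot 12-1$).
\begin{itemize}
\item It arises as $[4]\to[5,2]\to[2,5,3]\to[2,2,5,4]\to[3,2,5,4,2]$, so its length encoding is $(1,2,1)$.
\item The remainders are $59,33,7,2,1$ and the differences are $26,26,5,1$, so $\mathrm{LESR}=(2,1,1)$.
\item For the reversed chain $\frac{144}{83}=[2,4,5,2,3]$ one gets $(1,1,2)$.
\end{itemize}
Neither equals $(1,2,1)$.

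The paper's proof has the same defect. Its reduction to $b_s=2$ asserts that LESR is unchanged under reversal, which contradicts the paper's own remark that reversal reverses it. In its Case 2, Corollary~\ref{cor:LESR2} applied to $[b_s+1,\dots,b_1,2]$ actually yields $(c_m,\dots,c_1,1)$, not $(c_1,\dots,c_m,1)$.

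What your argument does establish correctly is the case $m\le 2$. With the corrected recursion it also shows that the two encodings always agree as multisets.
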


\noindent
\proofname


We proceed by induction on the number of steps in the inductive construction of a $k$-Wahl chain. 
Equivalently, we start from $[k+2]$ and at each step attach a $2$ either to the left or to the right. 
For the initial chain $[k+2]$, both the length encoding and the LESR are empty. 
At the first step, the chains $[k+3,2]$ and $[2,k+3]$ both have length encoding $(1)$, 
and a direct computation shows that their LESR is also $(1)$.

Assume that the claim holds for a $k$-Wahl chain $[b_1,\dots,b_s]$ whose length encoding and LESR are both $(c_1,\dots,c_m)$. 
By definition of a $k$-Wahl chain, one of $b_1$ and $b_s$ is equal to $2$.
By Definition~\ref{def:LEwahl}, the length encoding is unchanged under reversal, and the same is true for the LESR. Therefore, without loss of generality, we may assume that $b_s=2$. Then we have $b_1 \geq 3$.

\medskip
\noindent\textbf{Case 1:} We consider $[b_1+1,b_2,\dots,b_s,2]$.  
By Definition \ref{def:LEwahl}, its length encoding is $(c_1,\dots,c_{m-1},c_m+1)$.  
On the other hand, by Corollary \ref{cor:LESR2} and Lemma \ref{lemma:LESRsame}, its LESR is also $(c_1,\dots,c_{m-1},c_m+1)$.  

\medskip
\noindent\textbf{Case 2:} We consider $[2,b_1,b_2,\dots,b_s+1]$.  
By Definition \ref{def:LEwahl}, its length encoding is $(c_1,\dots,c_m,1)$.  
Considering instead the reversed chain $[b_s+1,b_{s-1},\dots,b_1,2]$, and applying Corollary \ref{cor:LESR2} with $b_1 \geq 3$, we obtain that the LESR is also $(c_1,\dots,c_m,1)$.  

\medskip
Thus in both cases the length encoding and the LESR coincide. 
This completes the induction step and proves the theorem.
\qed

Since the length encoding of a $k$–Wahl chain is invariant under reversal of the continued fraction, the two encodings in fact coincide.

\begin{example}\upshape
Let $G=\tfrac{1}{29}(1,12)$ be the cyclic group generated by $\tfrac{1}{29}(1,12)$. 
Since $\tfrac{29}{12} = [3,2,4,2]$, this is a $0$-Wahl chain, and its length encoding is $(2,1)$. 
On the other hand, by Example~\ref{ex29} the LESR of $G$ is also $(2,1)$. 
Thus they coincide.
\end{example}

\section{Zero continued fractions and P-resolutions}
A P-resolution of a cyclic quotient singularity $W=\compl{2}/G_{r,a}$ is a partial resolution $f: W' \to W$ such that $W'$ has only $T$-singularities and the canonical divisor $K_{W'}$ is $f$-ample. 
From the point of view of deformation theory, this is essential: by the theorem of Kollár and Shepherd-Barron \cite{KSB}, there is a bijection between the set of P-resolutions of $W$ and the irreducible components of the versal deformation space $\mathrm{Def}(W)$. 
Moreover, each component is dominated by the $\mathbb{Q}$-Gorenstein deformations of the corresponding P-resolution. 
Thus the existence of a P-resolution ensures the presence of a distinguished component of $\mathrm{Def}(W)$, and the combinatorics of zero continued fractions provides an explicit description of all such P-resolutions.

In this section, we study zero continued fractions for dual $k$-Wahl chains and relate
them to P-resolutions of cyclic quotient singularities.
By Theorem~\ref{thm:cf-P-resolution}, admissible zero continued fractions of the dual
Hirzebruch--Jung continued fraction are in one-to-one correspondence with
P-resolutions.
We then determine the minimal weight $\alpha([b_1,\dots,b_s])$ for dual $2$-, $1$-, and
$0$-Wahl chains, obtaining the values $0$, $1$, and $2$, respectively.
In particular, the case of dual $1$-Wahl chains leads to extremal P-resolutions.

\begin{definition}\upshape
A \emph{zero continued fraction} is a negative continued fraction $[a_1,\dots,a_s]$,
with integers $a_i \geq 1$, such that its value is equal to $0$.
\end{definition}

Every zero continued fraction can be obtained from the basic case $[1,1]$ by successive blow--ups.
The blow--up operation is defined as follows.  

For any consecutive pair $(u,v)$ in a continued fraction, replace it by
\[
[\ldots, u, v, \ldots]\;\longmapsto\;[\ldots, u{+}1,\, 1,\, v{+}1, \ldots].
\]

\medskip
If the chosen pair lies at the boundary, the same rule applies with the convention that
only the existing neighbor is increased.  
Thus
\[
[u, \ldots]\;\longmapsto\;[1,\, u{+}1, \ldots], 
\qquad
[\ldots, v]\;\longmapsto\;[\ldots, v{+}1,\, 1].
\]

\medskip
Starting from $[1,1]$ and iterating these blow--ups generates all zero continued fractions.  

\begin{example}\upshape
The following continued fractions are zero continued fractions:
\[
[1,1]\;\mapsto\;[1,2,1],\qquad
[1,2,1]\;\mapsto\;[1,3,1,2],\qquad
[1,2,1]\;\mapsto\;[1,2,2,1].
\]
  
\end{example}

It is a classical observation that zero continued fractions can be naturally interpreted in terms of triangulations of convex polygons. More precisely, consider a convex polygon with vertices $P_0,\dots, P_r$. By choosing a triangulation, each vertex $P_i$
 is contained in a certain number $v_i$
 of triangles. Then the sequence $[v_1,\dots,v_r]$ is a zero continued fraction, and, conversely, every zero continued fraction can be realized from some triangulation in this way. In Figure \ref{fig:my_label}, we have $v_0=3, v_1=1, v_2=2, v_3=2, v_4=1$. Therefore, $[1,2,2,1], [2,2,1,3],[2,1,3,1],[1,3,1,2], [3,1,2,2]$ are zero continued fractions.

\begin{figure}
  \centering
  \begin{tikzpicture}

  \coordinate(A) at ({2*cos(72)}, {2*sin(72)}) node at (A)[above]{$P_0$};
  \coordinate(B) at ({2*cos(144)}, {2*sin(144)}) node at (B)[above, left]{$P_4$};
  \coordinate(C) at ({2*cos(216)}, {2*sin(216)}) node at (C)[left]{$P_3$};
  \coordinate(D) at ({2*cos(288)}, {2*sin(288)}) node at (D)[below]{$P_2$};
  \coordinate(E) at ({2*cos(0)}, {2*sin(0)}) node at (E)[right]{$P_1$};

  \draw (A) -- (B) -- (C) -- (D)-- (E) -- cycle;

\draw (A)--(C);
\draw (A)--(D);

\end{tikzpicture}
  \caption{Zero continued fractions and polygon triangulations}
  \label{fig:my_label}
\end{figure}
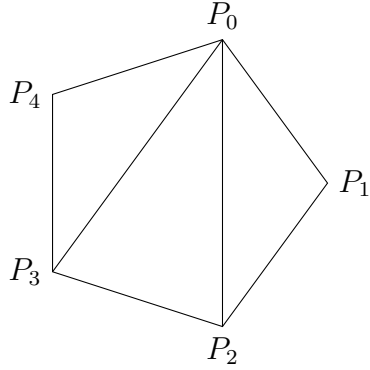

\begin{definition}\upshape \label{def:zcfweight}
Let $\frac{r}{a}=[b_1,\dots,b_s]$
be the Hirzebruch--Jung continued fraction, where $b_i \geq 2$ for all $i$. 
We say that $[b_1,\dots,b_s]$ admits a {zero continued fraction of weight $\lambda$} if there are indices $i_1 < i_2 < \cdots <i_u$ for some $u\geq1$ and integers $d_{i_j} \geq 1$ such that
\[
[\dots, b_{i_1}-d_{i_1}, \dots, b_{i_2}-d_{i_2}, \dots, b_{i_u}-d_{i_u}, \dots ] =0 ,
\]
and $\lambda+1 = \sum_{j=1}^{u}d_{i_j}$.
\end{definition}

\begin{definition}\upshape
Among all admissible zero continued fractions $[e_1,\dots,e_s]$ for $[b_1,\dots,b_s]$ 
and all possible choices of $(i_j,d_{i_j})$ as above, let 
\[
\alpha([b_1,\dots,b_s])
\]
denote the minimum of the resulting weights $\lambda$. 
We call $\alpha([b_1,\dots,b_s])$ the \emph{minimal weight of zero CF of $[b_1,\dots,b_s]$}.
When convenient, we also write $\alpha(r/a)$ for $[b_1,\dots,b_s]=r/a$.
\end{definition}

\begin{remark}\upshape
  Every Hirzebruch--Jung continued fraction $[b_1,\dots,b_s]$ with $s \geq 2$ has at least one zero continued fraction $[1,2,\dots,2,1]$. In particular, for an $A_n$ singularity, the admissible zero continued fraction is unique except when $n=3$. More precisely, if $n=3$, then there are exactly two such zero continued fractions, namely $[1,2,1]$ and $[2,1,2]$. 
\end{remark}

\begin{proposition}{\upshape \cite[Proposition 1.18]{U}}\label{prop:weight0condition}
 The Hirzebruch--Jung continued fraction $[b_1,\dots,b_s]$ has a zero continued fraction of weight $0$ if and only if its dual is a $T$-chain, i.e. the Hirzebruch-Jung continued fraction of $\frac{dm^2}{adm-1}$. In particular, a dual $2$-Wahl chain has a zero continued fraction of weight $0$.
\end{proposition}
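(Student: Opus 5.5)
Since this proposition is quoted from \cite[Proposition 1.18]{U}, the main point to check here is the ``in particular'' clause. I would derive it from the general equivalence together with Definition~\ref{def:kWahl} and Remark~\ref{rem:dualkwahl}, and only sketch how the equivalence itself is proved.

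\emph{Step 1: the ``in particular'' clause.} Let $[b_1,\dots,b_s]$ be a dual $2$-Wahl chain. By Remark~\ref{rem:dualkwahl} it is the Hirzebruch--Jung dual of a $2$-Wahl chain $\frac{r}{a}$, so its own dual is that $2$-Wahl chain. A $2$-Wahl chain is a classical Wahl chain. Indeed, starting from $[4]=\frac{4}{1}$ and applying the two operations of Definition~\ref{def:kWahl} reproduces exactly the Wahl chains $\frac{m^2}{am-1}$, which are the $T$-chains $\frac{dm^2}{adm-1}$ with $d=1$ (Remark~\ref{rem:Tsing}). To confirm this I would check by induction, using Proposition~\ref{prop:kwahldual} with $k=2$, that $r$ stays a perfect square $m^2$ with $a(r-a-2)\equiv 1 \pmod r$. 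I would then identify $a$ as the required $am-1$: writing $a=cm-1$, the congruence becomes $-(cm-1)(cm+1)\equiv 1 \pmod{m^2}$, which holds automatically, and one still has to rule out $a$ of other forms. The general equivalence then gives a zero continued fraction of weight $0$.

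\emph{Step 2: sketch of the equivalence.} Weight $0$ means $\sum_j d_{i_j}=1$: exactly one entry $b_i$ is lowered by $1$ and the result is a zero continued fraction. Through Christophersen's correspondence (Theorem~\ref{thm:cf-P-resolution}) this corresponds to a P-resolution with no exceptional curve, i.e.\ the singularity itself is a $T$-singularity. For the converse direction, I would induct along the standard generation of $T$-chains: from $[4]$, $[3,3]$, $[3,2,3],\dots$ by the same two operations $[b_1+1,\dots,b_\ell,2]$ and $[2,b_1,\dots,b_\ell+1]$. Under Hirzebruch--Jung duality each operation becomes a polygon-triangulation move (adding an ear), and the move carries a weight-$0$ zero continued fraction to another one. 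For the base cases this is direct: for instance the dual of $[4]$ is $[2,2,2]$, and lowering the middle entry gives $[2,1,2]=0$.

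\emph{Main obstacle.} The hardest part is the bookkeeping of Step 2: matching the two growth operations on the $T$-chain side with ear-additions on the dual side, and showing that the single lowered entry is carried along consistently. Since the paper only needs the clause of Step 1, I would rely on \cite{U} for the full equivalence and concentrate on verifying carefully that $2$-Wahl chains coincide with the Wahl $T$-chains.
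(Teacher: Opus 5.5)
Your Step~1 is the intended derivation of the ``in particular'' clause; the paper itself gives no proof and only cites \cite{U}. The genuine problem is Step~2, which sketches an equivalence that is false in the generality printed, and it fails at the very base cases you list. Take the $T$-chain $[3,3]=\frac{8}{3}$ ($d=m=2$, $a=1$). Its dual is $\frac{8}{5}=[2,3,2]$. Lowering a single entry gives $[1,3,2]=\frac{3}{5}$, $[2,2,2]=\frac{4}{3}$ and $[2,3,1]=\frac{3}{2}$, none of which is $0$, so there is no zero continued fraction of weight $0$. The zero continued fractions below $[2,3,2]$ are $[1,2,1]$ (the Artin component, i.e.\ the minimal resolution) and $[2,1,2]$ (the trivial P-resolution $\mathrm{id}\colon W\to W$), of weights $2$ and $1$. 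So your identification ``weight $0$ $\Leftrightarrow$ P-resolution without exceptional curve'' is wrong. Propagating $[2,1,2]\le[2,d+1,2]$ (the dual of $[3,2,\dots,2,3]$) by ear additions gives weight $d-1$ for $T$-chains with parameter $d$, which matches the next quoted result linking weight $1$ to $d=2$. The correct form of \cite[Proposition 1.18]{U} has \emph{Wahl} chains ($d=1$) in place of $T$-chains. Your Step~1 only uses $d=1$, so the ``in particular'' clause survives, but you should neither rely on nor try to prove the equivalence for all $T$-chains.

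Your planned self-check that $2$-Wahl chains are Wahl chains would also not work as described. Proposition~\ref{prop:kwahldual} with $k=2$ only gives $(a+1)^2\equiv 0 \pmod r$; it does not make $r$ a square. Even when $r=m^2$, this already forces $m\mid a+1$, so there are no ``other forms'' to exclude, and the congruence cannot detect the coprimality condition. For example, $\frac{4}{3}=[2,2,2]$ has $r=2^2$, $a=2\cdot 2-1$ and $a(r-a-2)\equiv 1 \pmod 4$, yet it is not a $2$-Wahl chain. You can cite \cite{KSB}, as Remark~\ref{rem:Tsing} does. More simply, run your ear-addition idea starting only from the base $[4]$; this proves the clause with no identification at all:
\begin{itemize}
\item $[2,2,2]$ carries $[2,1,2]$, realized by the diagonal $P_1P_3$ of $P_0P_1P_2P_3$, so $v_0=1$.
\item The two operations defining dual $2$-Wahl chains send $[v_1,\dots,v_s]$ to $[2,v_1,\dots,v_{s-1},v_s+1]$ and $[v_1+1,v_2,\dots,v_s,2]$.
\item Both keep the total lowering $\sum_i(b_i-v_i)=1$, and both again omit a vertex of valence $1$.
\end{itemize}
This is exactly the argument the paper uses for Theorem~\ref{thm:k-wahlchain-zcf}(i).
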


\begin{proposition}{\upshape \cite[Proposition 1.19]{U}}
Let $0<q<m$ be coprime integers. If $\frac{m}{m-q} = [\,b_{1},\ldots,b_{s}\,]$ admits a
zero continued fraction of weight $1$, then the HJ continued fraction of $\frac{m}{q}$ is a
$T$-chain with $d=2$ and $n>1$, \emph{or}
\[
  \frac{m}{q} = [\,W_{0},\,c,\,W_{1}\,],
\]
where $c\ge 1$ and $W_i$ are $2$-Wahl chains (including one or both empty).
\end{proposition}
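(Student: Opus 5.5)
The plan is to split according to how the total subtraction $\lambda+1=2$ in Definition~\ref{def:zcfweight} is distributed. Either \textbf{(A)} a single index $i$ has $d_i=2$, or \textbf{(B)} two indices $i<j$ have $d_i=d_j=1$. In both cases I would argue by induction on the length $s$, blowing down the zero continued fraction $[e_1,\dots,e_s]$ and tracking the effect on the dual chain $\frac{m}{q}$ through Riemenschneider's point diagram. The base cases are the short zero continued fractions $[1,1]$ and $[1,2,1]$ (and small variants), where the possible $[b_1,\dots,b_s]$ and their duals can be listed directly.

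\textbf{Key observation for the induction.} Since $b_l\ge 2$, an entry $e_l=1$ can only occur at a subtracted position. There it comes either from $b_l=2,\ d_l=1$, or from $b_l=2,\ d_l=1$ near a second subtraction, or from $b_l=3,\ d_l=2$. Every zero continued fraction of length at least $2$ contains a $1$. Blowing it down replaces $[\dots,e_{l-1},1,e_{l+1},\dots]$ by $[\dots,e_{l-1}-1,e_{l+1}-1,\dots]$, which is again a zero continued fraction. On the $b$-side this deletes the entry $b_l$ and lowers both neighbours by one.

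\textbf{Translating the blow-down.} Using the point diagram (row lengths of one chain are column lengths of the other), I would check that deleting a $2$ from the $b$-chain and lowering its two neighbours corresponds, on the dual $\frac{m}{q}$, to undoing one step of Definition~\ref{def:kWahl}. That is, $[c_1+1,c_2,\dots,c_t,2]$ becomes $[c_1,\dots,c_t]$, or the symmetric move, possibly combined with a shift of the special entry $c$. The goal is an inductive statement of the following form: the smaller chain has a zero continued fraction of weight $1$, so by induction it is a $T$-chain with $d=2$ or of the form $[W_0,c,W_1]$, and re-attaching the $2$ keeps it in this class.

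\textbf{Two cases and endpoints.} In case (A) the blow-downs never touch the doubly subtracted position until the very end, where one reaches $[1,1]$ with a $3$ reduced to $1$. Dually this starts a chain from $[4]$, the $d=2$, $n=1$ base, so one obtains $T$-chains $\frac{2n^2}{2na-1}$, matching Proposition~\ref{prop:weight0condition}'s pattern with $d=2$. In case (B) the two subtracted positions separate the chain. The blow-downs act on the part to the left of $i$ and the part to the right of $j$ essentially independently, and each side unwinds, exactly as in the weight-$0$ case of Proposition~\ref{prop:weight0condition}, to a $2$-Wahl chain $W_0$ or $W_1$ (possibly empty). The stretch between $i$ and $j$ collapses dually to a single entry $c\ge1$, which yields $[W_0,c,W_1]$.

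The step I expect to be the main obstacle is this dual bookkeeping. One must show rigorously that each blow-down on the $e$-side really corresponds to removing a single Wahl-step from one of $W_0$, $W_1$ (or from the $d=2$ $T$-chain), without the two sides interacting, and one must handle the degenerate situations where $i$ or $j$ is at the boundary or where $j=i+1$. I would first try to settle this with an explicit point-diagram picture of a $2$ at position $l$ and its neighbours, and only then run the induction.
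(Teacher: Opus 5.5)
The paper does not prove this statement; it quotes it from Urz\'ua. Your plan therefore has to stand on its own, and its central step fails.

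\textbf{The reduction does not preserve the hypothesis.} Your split into (A) and (B) is right, and so is your remark that $e_l=1$ forces $d_l\ge 1$. But that remark is exactly what breaks the induction. If you delete position $l$ and lower both neighbours on the $b$-side and the $e$-side, every other difference $b_i-e_i$ is unchanged. So the total subtraction drops from $2$ to $2-d_l\le 1$, and the reduced pair has weight $0$ or $-1$, never $1$. The induction hypothesis therefore never applies. Worse, a neighbour equal to $2$ becomes $1$, so the new $b$ is not even Hirzebruch--Jung. Two small examples show both failures:
\begin{itemize}
\item $\frac mq=\frac72=[4,2]$ has dual $[2,2,3]$ and weight-$1$ zero continued fraction $[2,1,2]$. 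Your blow-down gives $e'=[1,1]$ and $b'=[1,2]$.
\item The $T$-chain $\frac83=[3,3]$ ($d=n=2$) has dual $[2,3,2]$ with $e=[2,1,2]$ and $d_2=2$. Your blow-down gives $b'=e'=[1,1]$.
\end{itemize}

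\textbf{Your description of case (A) is also wrong.} In the second example, $e$ has exactly one entry $1$, and it sits at the doubly subtracted index, since no other index is subtracted. So the \emph{first} blow-down hits that index, not the last. The base of (A) is $[2,3,2]$ with dual $[3,3]$. It is not $[4]$, which is the Wahl chain with $d=1$, $n=2$.

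\textbf{The dictionary to Wahl steps is unsupported.} You claim that your blow-down corresponds to undoing a Wahl step on $\frac mq$. But the move replaces $b_l$ by $1$ in value, so it changes $\frac m{m-q}$ itself. Also, Hirzebruch--Jung duality does not split $b$ at $i<j$ into the duals of $W_0$ and $W_1$; the dual of a concatenation overlaps in one entry.

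\textbf{What is missing: a meaning for the weight.} By Theorem~\ref{thm:cf-P-resolution}, an admissible $k$ corresponds to a P-resolution $f\colon W^+\to W$. The quantity $\lambda=\sum_i(b_i-k_i)-1$ is the Milnor number of the associated smoothing component. As a check, $k=[1,2,\dots,2,1]$ gives $\lambda$ equal to the length of $\frac mq$, as it must for the Artin component. The $\mathbb{Q}$-Gorenstein smoothing of $\frac{1}{dn^2}(1,dna-1)$ has Milnor number $d-1$. Hence $\lambda=\#\{\text{curves in } f^{-1}(0)\}+\sum_p(d_p-1)$, summed over the singular points $p$ of $W^+$. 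So $\lambda=1$ leaves exactly two possibilities:
\begin{itemize}
\item $W^+=W$ is a $T$-singularity with $d=2$, and $n>1$ because $s\ge 2$.
\item $f^{-1}(0)$ is a single curve $\Gamma$ and $W^+$ has only Wahl points, at most two since the configuration is a chain. Resolving them gives $\frac mq=[W_0,c,W_1]$, where $-c\le -1$ is the self-intersection of the strict transform of $\Gamma$.
\end{itemize}
If you want a purely combinatorial proof instead, you need a reduction that keeps the total subtraction equal to $2$ and keeps $b$ Hirzebruch--Jung. You must then prove that this reduction is compatible with the shape $[W_0,c,W_1]$ on the dual side. That is the real content of the statement, and your proposal does not supply it.
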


\begin{theorem}{\upshape \cite{Altmann, Christophersen}}\label{thm:cf-P-resolution}
Let $W=\compl{2}/G_{r,a}$ be a cyclic quotient singularity, and let $\frac{r}{r-a}=[b_1,\dots,b_s]$ be the dual Hirzebruch--Jung continued fraction.

Define
\[
K(W):=\left\{[k_1,\dots,k_s]=0 \;\middle|\; 1\le k_i\le b_i \text{ for all } i \right\}.
\]
Then there is a one-to-one correspondence between the set $K(W)$ and the set of P-resolutions of $W$. Equivalently, in the notation of Definition~\ref{def:zcfweight}, the admissible zero continued fractions
supported on $[b_1,\dots,b_s]$ are in one-to-one correspondence with the P-resolutions
of $W$.
\end{theorem}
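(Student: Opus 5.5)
Since the statement is cited from \cite{Altmann, Christophersen}, I would not rebuild Christophersen's deformation-theoretic argument. Instead I would reconstruct the correspondence through toric geometry, as Altmann does. The plan has three steps: make every P-resolution toric, rewrite its data on the dual side, and recognise that data as an element of $K(W)$.

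\textbf{Step 1: P-resolutions are toric.} By the argument of Koll\'ar--Shepherd-Barron \cite{KSB}, every P-resolution $f\colon W'\to W$ of a cyclic quotient singularity is dominated by the minimal resolution. It is therefore toric. Concretely, $W'$ is given by a subdivision of the cone $\sigma=\mathbb{R}_{\ge0}(1,0)+\mathbb{R}_{\ge0}(0,1)$ in $N=\mathbb{Z}^2+\frac1r(1,a)\mathbb{Z}$. Its rays are a subset of the lattice points on the Newton boundary, and each two-dimensional subcone must be a $T$-singularity cone. This reduces P-resolutions to choosing a subset of boundary rays, subject to two conditions:
\begin{itemize}
\item[(a)] each subcone is of type $\frac{1}{dm^2}(1,adm-1)$ or is smooth or a rational double point;
\item[(b)] $K_{W'}$ is $f$-ample, i.e.\ $K_{W'}\cdot C_j>0$ for every exceptional curve $C_j$, which is a local convexity condition on the support function of $K$ at each chosen ray.
\end{itemize}

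\textbf{Step 2: pass to the dual side.} On the dual cone $\sigma^\vee$, the lattice points on the compact Newton boundary are $w^0,\dots,w^{s+1}$. They satisfy $w^{i-1}+w^{i+1}=b_i w^i$ with $\frac{r}{r-a}=[b_1,\dots,b_s]$. I would associate to $W'$ a convex lattice polygon, or equivalently a triangulation of the $(s+2)$-gon with vertices labelled $0,\dots,s+1$. Each chosen ray of $W'$ cuts $\sigma^\vee$ into pieces, and each $T$-cone contributes a block of the triangulation. I would define $k_i$ as the number of triangles at vertex $i$. By the classical polygon description recalled above (Figure~\ref{fig:my_label}), the chain $[k_1,\dots,k_s]$ is automatically a zero continued fraction.

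\textbf{Step 3: match the conditions and build the inverse.} I would show that conditions (a) and (b) on $W'$ are equivalent to the inequalities $1\le k_i\le b_i$. Then I would construct the inverse map: starting from $[k_1,\dots,k_s]\in K(W)$, use the associated triangulation to read off which boundary rays to keep and verify that the resulting cones are $T$-cones. Both maps are bijective because a triangulation is determined by its vertex degrees once the endpoints are fixed. This last fact follows by induction via the blow-up description of zero continued fractions, since a $1$ marks an ear of the triangulation.

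\textbf{Main obstacle.} The hardest step is the identification in Step 3: showing that a subcone is a $T$-cone exactly when the corresponding sub-chain of $[b_i-k_i]$ behaves as in Proposition~\ref{prop:weight0condition}, and that $f$-ampleness corresponds to $k_i\le b_i$ with the correct boundary behaviour. I would first attack this locally, one $T$-cone at a time. I would compare the dual HJ fraction of $\frac{dm^2}{adm-1}$ with the weight-$0$ zero continued fractions characterised in Proposition~\ref{prop:weight0condition}, and only then glue the local pieces. Along the glued rays, the $K$-ampleness inequality should translate into the positivity $b_i-k_i\ge0$.
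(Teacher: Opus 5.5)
The paper gives no proof of this statement (it is quoted from Christophersen and Altmann), so your proposal must stand on its own. \textbf{Its Step~1 is false.}

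A P-resolution of a cyclic quotient singularity need not be dominated by the minimal resolution. The minimal resolution of $W'$ may contain $(-1)$-curves, i.e.\ the rays of $W'$ may lie off the Newton boundary. Take $W=\frac{1}{19}(1,7)$. Then $\frac{19}{7}=[3,4,2]$, a $1$-Wahl chain ($[3]\to[2,4]\to[3,4,2]$), and $\frac{19}{12}=[2,3,2,3]$, so $K(W)=\{[1,2,2,1],[1,3,1,2],[2,2,1,3]\}$.

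The Newton-boundary rays are $u_1=\frac{1}{19}(1,7)$, $u_2=\frac{1}{19}(3,2)$, $u_3=\frac{1}{19}(11,1)$. Running through the eight subsets gives exactly two P-resolutions: $\{u_1,u_2\}$ (an $A_1$ point) and $\{u_1,u_3\}$ (a $\frac14(1,1)$ point). The third P-resolution is the extremal one given by the single ray $u_1+u_2=\frac{1}{19}(4,9)$. Its minimal resolution is $[4,1,5,2]$, its singular points are the Wahl points $[4]$ and $[5,2]$, and $K_{W'}\cdot\Gamma=-1+\frac12+\frac23=\frac16>0$.

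This P-resolution corresponds to the weight-one element $[2,2,1,3]$, which is precisely the kind of P-resolution in the paper's final corollary. Your reduction throws it away, so no bijection onto $K(W)$ can be built from subsets of Newton-boundary rays.

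What does hold is weaker. $W'$ is toric, and since $K_{W'}$ is $f$-ample, the negativity lemma gives every exceptional curve of $W'\to W$ negative discrepancy. Hence the rays of $W'$ range over the primitive $v\in\sigma\cap N$ with $v_1+v_2<1$: $W'$ is dominated by the maximal resolution, not the minimal one. Any correct version of Step~3 has to handle these additional rays.

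There are further problems downstream.
\begin{itemize}
\item In Step~2 the polygon has the wrong size. The triangle counts at $s$ vertices of a triangulated $(s+2)$-gon (two vertices omitted) do not form a zero continued fraction. A square with one diagonal gives $[1,2]=\frac12$ or $[2,1]=1$. The description recalled in the paper uses an $(s+1)$-gon $P_0\cdots P_s$ and omits only $P_0$.
\item An interior ray $\rho$ of $\sigma$ does not cut $\sigma^\vee$ at all, since $\rho^\perp\cap\sigma^\vee=\{0\}$. So no map from P-resolutions to triangulations has actually been defined.
\item The matching of $T$-cones and of $f$-ampleness with the bounds $1\le k_i\le b_i$ is the whole content of Christophersen's theorem. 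In Step~3 it is only announced, not proved.
\end{itemize}
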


\begin{theorem}\label{thm:k-wahlchain-zcf}
Let $[b_1,\dots,b_s]$ be a Hirzebruch--Jung continued fraction with $s\geq 2$. Then the following hold:
\begin{itemize}
  \item[{\rm (i)}] If it is a dual $1$-Wahl chain, then $\alpha([b_1,\dots,b_s])=1$.
  \item[{\rm(ii)}] If it is a dual $0$-Wahl chain, then $\alpha([b_1,\dots,b_s])=2$.
\end{itemize}
\end{theorem}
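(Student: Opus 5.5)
The proof splits into an upper bound, obtained by exhibiting explicit admissible zero continued fractions, and a lower bound, obtained by ruling out smaller weights with Proposition~\ref{prop:weight0condition}, \cite[Proposition 1.19]{U} and an arithmetic invariant of $k$-Wahl chains. Throughout, $[b_1,\dots,b_s]=\frac{r}{r-a}$ is the dual of the $k$-Wahl chain $\frac{r}{a}=[c_1,\dots,c_t]$, as in Remark~\ref{rem:dualkwahl}.

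\textbf{Upper bounds.} I would induct along Definition~\ref{def:dualkwahlchainstwo}.

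For (i), the base case is $[2,2]$. Here $[1,1]$ has total defect $2$, so the weight is $1$.

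For (ii), the base case is $[2]$. Since $s\ge 2$, I would start instead from $[3,2]$ (or $[2,3]$). Here $[1,1]$ has total defect $3$, so the weight is $2$.

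For the inductive step I want a rule sending an admissible zero continued fraction $[e_1,\dots,e_\ell]$ of $[b_1,\dots,b_\ell]$ to one of $[b_1+1,b_2,\dots,b_\ell,2]$ with the same total defect. The natural candidate is $[e_1+1,e_2,\dots,e_\ell,2]$, which keeps $d_1$ unchanged and puts defect $0$ on the new entry. The small cases support this:
\[
[1,1]\mapsto[2,1,2], \qquad [2,1,2]\mapsto[3,1,2,2],
\]
and both are zero continued fractions. However, the rule fails for $[1,2,1]\mapsto[2,2,1,2]$. So the rule is not valid for an arbitrary zero continued fraction, and the induction has to carry an extra condition; the example suggests something like $e_\ell\ge 2$, or an appropriate position of the entry equal to $1$.

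I would verify the rule in $2\times 2$ matrix form, writing $M(x)=\begin{pmatrix}x&-1\\1&0\end{pmatrix}$, so that a zero continued fraction is one whose product $\prod M(e_i)$ has $(1,1)$-entry zero. Alternatively I would use the triangulation picture: a zero continued fraction is a triangulated polygon, and adding $1$ at one end and appending a vertex with value $2$ is a local modification of the triangulation. The symmetric operation $[2,b_1,\dots,b_\ell+1]$ follows by reversal.

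\textbf{Main obstacle.} The main obstacle is finding an invariant condition on the chosen zero continued fraction that makes both extension rules legitimate and is preserved by both rules. The condition must also be compatible with the mixed left/right histories of the chain. This is where I expect to spend most effort. I would try first to track where the unique $1$ in a weight-$1$ zero continued fraction sits relative to the inserted $2$'s, in the spirit of Lemma~\ref{lemma:LESRsame}.

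\textbf{Lower bounds.} The key invariant is that a $k$-Wahl chain $[c_1,\dots,c_t]$ satisfies $\sum c_i=3t+k-1$. This holds for the base $[k+2]$, and each step of Definition~\ref{def:kWahl} adds one entry and raises the sum by $3$. In addition, Proposition~\ref{prop:kwahldual} gives $a(r-a-k)\equiv 1\pmod r$.

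For (i), suppose the weight were $0$. By Proposition~\ref{prop:weight0condition}, $\frac{r}{a}$ would be a $T$-chain $\frac{dm^2}{a'dm-1}$. Comparing coefficient sums forces $d=2$, using the known relation $\sum c_i=3t-d+2$ for $T$-chains, which I would justify by the analogous induction. Then $r=2m^2$ and $a=2ma'-1$, so
\[
a(r-a-1)\equiv -a(a+1)\equiv 2ma'(1-2ma')\pmod{2m^2}.
\]
The right-hand side is even while $r$ is even, so it cannot be $\equiv 1$, a contradiction. Hence $\alpha=1$.

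For (ii), first exclude weight $0$. The sum forces $d=3$, and then $-a^2\equiv 1 \pmod{3m^2}$ with $a=3ma'-1$. Reducing modulo $3$ gives $0\equiv 2$, a contradiction.

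Next exclude weight $1$ with \cite[Proposition 1.19]{U}. The $T$-chain alternative has $d=2$, whose sum $3t$ is incompatible with $3t-1$. In the alternative $[W_0,c,W_1]$ with $W_i$ $2$-Wahl chains and $c\ge 2$ (entries of a Hirzebruch--Jung continued fraction are at least $2$), the coefficient sum is $3t-1+c$ if both $W_i$ are nonempty, $3t-2+c$ if exactly one is, and $c$ if both are empty. All three exceed $3t-1$, which is again a contradiction. Hence $\alpha=2$.
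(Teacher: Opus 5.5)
Your upper-bound half is not yet a proof. You find the right extension rules, $[e_1,\dots,e_\ell]\mapsto[e_1+1,e_2,\dots,e_\ell,2]$ and $[e_1,\dots,e_\ell]\mapsto[2,e_1,\dots,e_{\ell-1},e_\ell+1]$. These keep the total defect and admissibility. From $[1,2,1]\mapsto[2,2,1,2]$ you correctly see that they need an extra inductive hypothesis, but you never supply one, and your guess $e_\ell\ge 2$ already fails for the base case $[1,1]$.

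The missing idea, which is the core of the paper's proof, is to carry the triangulation along. Regard $[e_1,\dots,e_\ell]$ as read off a triangulated polygon $P_0P_1\cdots P_\ell$ with the vertex $P_0$ omitted, and require $v_0=1$, i.e.\ $P_0$ is an ear. Glue a new triangle onto the edge $P_\ell P_0$ (resp.\ $P_0P_1$). This turns $v_0$ into $2$, raises $v_\ell$ (resp.\ $v_1$) by one, and creates a new ear of value $1$. Omitting that new ear gives exactly $[2,e_1,\dots,e_{\ell-1},e_\ell+1]$ (resp.\ $[e_1+1,e_2,\dots,e_\ell,2]$), so the invariant reproduces itself.

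In your matrix language the invariant is $M(e_1)\cdots M(e_\ell)=\begin{pmatrix}0&-1\\1&-1\end{pmatrix}$. Since $M(x+1)=\begin{pmatrix}1&1\\0&1\end{pmatrix}M(x)=M(x)\begin{pmatrix}1&0\\-1&1\end{pmatrix}$, a direct check shows both rules send this matrix to itself. The base $[1,1]$ has this product. By contrast, $[1,2,1]$ has product $\begin{pmatrix}0&-1\\1&-2\end{pmatrix}$, i.e.\ $v_0=2$, which is exactly why your counterexample fails.

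Your lower-bound route differs from the paper's: you use coefficient sums, the congruence of Proposition~\ref{prop:kwahldual}, and Urz\'ua's propositions. The weight-$0$ exclusions are correct.

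The weight-$1$ exclusion in (ii), however, rests on a misreading. \cite[Proposition 1.19]{U} explicitly allows $c\ge 1$. For $c=1$ the chain $[W_0,1,W_1]$ is not the Hirzebruch--Jung expansion of $\frac mq$; it only has the same value. For example, the $1$-Wahl chain $\frac{19}{11}=[2,4,3]=[2,5,1,4]$. So ``$c\ge2$'' is unjustified and your sum comparison does not apply. One can repair this by checking that blowing down the $1$'s never decreases $\sum-3\cdot\mathrm{length}$, so the resulting Hirzebruch--Jung chain has sum $\ge 3t>3t-1$. Also, your both-empty case with $c=2$ does not exceed $3t-1$; it is excluded only by $s\ge 2$.

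The paper's count makes all of this unnecessary. Every admissible zero continued fraction comes from a triangulation with $\sum_{i=0}^s v_i=3(s-1)$, and a dual $k$-Wahl chain has $\sum b_i=3s-k-1$. Hence its weight is $\lambda=\sum b_i-\sum_{i\ge1}v_i-1=1-k+v_0\ge 2-k$, with equality iff $v_0=1$. This gives both lower bounds at once. It also shows that the upper bound is equivalent to producing a zero continued fraction with $v_0=1$, which is precisely the invariant your induction was missing.
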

\noindent
\proofname

First, we prove (i). The dual $1$-Wahl chain $[2,2]$ admits a zero continued fraction $[1,1]$, whose weight is $1$. \\
We claim that if $[b_1,\dots,b_s]$ has a zero continued fraction $[v_1,\dots,v_s]$ which is induced by a triangulation of the convex polygon $P_0 P_1 \dots P_s$ with $v_0=1$, then $[2,b_1,\dots,b_{s-1},b_s+1]$ and $[b_1+1,b_2,\dots, b_s,2]$ have zero continued fractions $[2,v_1,\dots,v_{s-1},v_s+1]$ and $[v_1+1,v_2,\dots, v_s,2]$, respectively. Indeed, consider the polygon $P_0P_1\dots P_s$ with triangulation given by $(v_0,\dots,v_s)$. Adding a new vertex $P_{s+1}$ so that $P_0P_1\dots P_sP_{s+1}$ is convex, we obtain $v_{s+1}=1$, and both $v_0$ and $v_s$ increase by one compared to the original triangulation (see Figure \ref{fig:chainrule}). Hence the sequence $[v_0,v_1,\dots,v_{s-1},v_s+1] $, which equals $[2,v_1,\dots,v_{s-1},v_s+1]$, is a zero continued fraction. Thus $[2,b_1,\dots,b_{s-1},b_s+1]$ admits a zero continued fraction of weight $1$. Similarly, if we add the new vertex $P_{s+1}$ so that $P_0P_{s+1}P_1\dots P_s$ becomes convex, then $v_0$ and $v_1$ both increase by one, and we obtain $[v_1+1,v_2,\dots,v_s,v_0]$, that is $[v_1+1,v_2,\dots,v_s,2]$, as a zero continued fraction. By Proposition \ref{prop:weight0condition}, weight $0$ is impossible for a dual $1$-Wahl chain. 
Hence $\alpha=1$ for all dual $1$-Wahl chains.\\
Next we prove (ii). The dual $0$-Wahl chain $[3,2]$ admits a zero continued fraction $[1,1]$, and its
weight is $2$. Moreover, this zero continued fraction corresponds to a triangulation
with $v_0=1$.
By the same inductive argument as above, every dual $0$-Wahl chain admits a zero
continued fraction of weight $2$ arising from a triangulation with $v_0=1$.

It remains to show that this weight is minimal.
Let $[v_1,\dots,v_s]$ be any admissible zero continued fraction of $[b_1,\dots,b_s]$,
and write $d_i:=b_i-v_i$ for $1\le i\le s$.
Then, by Definition~4.4, we have
\[
\lambda+1=\sum_{i=1}^s d_i=\sum_{i=1}^s (b_i-v_i),
\]
Therefore, for fixed $[b_1,\dots,b_s]$, minimizing the weight $\lambda$ is equivalent
to maximizing $\sum_{i=1}^s v_i$.

On the other hand, if $[v_1,\dots,v_s]$ corresponds to a triangulation of the
$(s+1)$-gon $P_0P_1\cdots P_s$, then
\[
\sum_{i=0}^s v_i = 3(s-1).
\]
Since every triangulation satisfies $v_0\ge 1$, $\sum_{i=1}^s v_i$
is maximal if and only if $v_0=1$.
Now the weight $2$ zero continued fraction constructed above for a dual $0$-Wahl
chain is realized by a triangulation with $v_0=1$. Hence it maximizes
$\sum_{i=1}^s v_i$, and therefore its weight is minimal among all admissible zero
continued fractions. Therefore, we conclude that $\alpha([b_1,\dots,b_s])=2.$

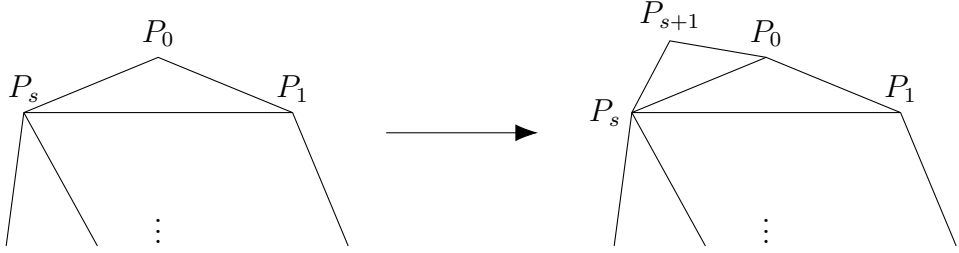
\begin{figure}
\centering

  \begin{tikzpicture}
\centering
  \coordinate(A) at ({2.5*cos(45)}, {2.5*sin(45)}) node at (A)[above]{$P_1$};
  \coordinate(B) at ({2.5*cos(90)}, {2.5*sin(90)}) node at (B)[above]{$P_0$};
  \coordinate(C) at ({2.5*cos(135)}, {2.5*sin(135)}) node at (C)[above]{$P_s$};
  \coordinate(D) at ({2*cos(180)}, {2.5*sin(180)});
  \coordinate(E) at ({2.5*cos(0)}, {2.5*sin(0)});
  \coordinate(F) at (-0.8,0);  
  \coordinate(O) at (0,0.3) node at (O){$\vdots$}; 
  \draw (A) -- (B) -- (C) -- (D) ;
  \draw (E) -- (A) ;
\draw (C) -- (F);
 
\draw (A)--(C);

\coordinate(1) at (3,1.5);
\coordinate(2) at (5,1.5);

\draw[-{Latex[length=3mm]}] (1) -- (2);

  \coordinate[xshift=8cm](I) at ({2.5*cos(45)}, {2.5*sin(45)}) node at (I)[above]{$P_1$};
  \coordinate[xshift=8cm](J) at ({2.5*cos(90)}, {2.5*sin(90)}) node at (J)[above]{$P_0$};
  \coordinate[xshift=8cm](K) at ({2.5*cos(135)}, {2.5*sin(135)}) node at (K)[left]{$P_s$};
  \coordinate[xshift=8cm](L) at ({2.0*cos(180)}, {2*sin(180)});
  \coordinate[xshift=8cm](M) at ({2.5*cos(0)}, {2.5*sin(0)});
  \coordinate[xshift=8cm](N) at (-0.8,0);  
  \coordinate[xshift=8cm](P) at (0,0.3) node at (P){$\vdots$}; 
 \coordinate[xshift=8cm](Q) at ({3*cos(115)}, {3*sin(115)}) node at (Q)[above]{$P_{s+1}$};
\draw (I) -- (J) -- (K) -- (L);
\draw (M) -- (I);
\draw (I) -- (K);
\draw (K) -- (N);

 \draw (K) -- (Q) -- (J);
\end{tikzpicture}
 
  \caption{Adding a new vertex}
  \label{fig:chainrule}
\end{figure}

\qed


We recall the notion of an extremal P-resolution. Extremal P-resolutions arise naturally in the birational geometry of degenerations of surfaces with Wahl singularities. In Urz\'ua's framework of \(W\)-surfaces, they appear as surface-theoretic models associated with semistable extremal neighborhoods and their flips. Thus, among all P-resolutions, extremal P-resolutions correspond to the case where the exceptional locus is as small as possible and all singularities on the partial resolution are Wahl singularities.

\begin{definition}\upshape
Let $f: W^+ \to W$ be a P-resolution of a cyclic quotient singularity
$(P \in W)$. We say that $f$ is an \emph{extremal P-resolution} if
\(f^{-1}(P)\) is a smooth rational curve \(\Gamma\), and \(W^+\) has only Wahl
singularities.
\end{definition}

\begin{remark}\label{rem:weight1-extremal}\upshape
Let $W=\compl{2}/G_{r,a}$ be a cyclic quotient singularity, and let $\frac{r}{r-a}=[b_1,\dots,b_s]$
be the dual Hirzebruch--Jung continued fraction.
By \cite[\S 4.1]{HTU}, the extremal P-resolutions of $W$ are in one-to-one
correspondence with pairs of indices $1\le i<j\le s$ such that
\[
[b_1,\dots,b_i-1,\dots,b_j-1,\dots,b_s]=0.
\]
In our terminology, this means precisely that the dual Hirzebruch--Jung continued
fraction admits an admissible zero continued fraction of weight $1$.
\end{remark}

Remark \ref{rem:dualkwahl}, Theorem \ref{thm:k-wahlchain-zcf} and Remark \ref{rem:weight1-extremal} lead to the following.

\begin{corollary}
If $r/a$ is a $1$-Wahl chain, then the cyclic quotient singularity $\compl{2}/G_{r,a}$ has an extremal P-resolution.
\end{corollary}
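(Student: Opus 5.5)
The corollary follows by chaining three results already in place. Start with a $1$-Wahl chain $\frac{r}{a}=[b_1,\dots,b_s]$. By Remark~\ref{rem:dualkwahl}, the dual Hirzebruch--Jung continued fraction $\frac{r}{r-a}=[e_1,\dots,e_\ell]$ is a dual $1$-Wahl chain. The recursive construction of dual $1$-Wahl chains in Definition~\ref{def:dualkwahlchainstwo} starts from $[2,2]$ and only lengthens the chain, so $\ell\ge 2$. This is exactly the hypothesis of Theorem~\ref{thm:k-wahlchain-zcf}.

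Theorem~\ref{thm:k-wahlchain-zcf}(i) then gives $\alpha([e_1,\dots,e_\ell])=1$. In particular, there is an admissible zero continued fraction of weight exactly $1$. By Definition~\ref{def:zcfweight}, this means we subtract integers $d_{i_j}\ge 1$ with $\sum d_{i_j}=2$. There are two possibilities: either two distinct indices each lose $1$, or a single index loses $2$.

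The only real point is to rule out the second possibility, since Remark~\ref{rem:weight1-extremal} needs two distinct indices $i<j$, each decreased by $1$. Here I would use the explicit construction in the proof of Theorem~\ref{thm:k-wahlchain-zcf}(i) rather than the abstract value of $\alpha$. That proof starts from $[2,2]\mapsto[1,1]$, which decreases two distinct positions by $1$. At each inductive step, the chain and the zero continued fraction receive the same modification: a new last or first entry $2$ is appended to both, and the same old entry is raised by $1$ in both. So the differences $e_i-v_i$ are preserved at the old positions and equal $0$ at the new one. By induction, the constructed zero continued fraction still differs from $[e_1,\dots,e_\ell]$ by exactly $1$ at exactly two indices.

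Hence $[e_1,\dots,e_i-1,\dots,e_j-1,\dots,e_\ell]=0$ for some $1\le i<j\le\ell$. By Remark~\ref{rem:weight1-extremal}, this pair of indices yields an extremal P-resolution of $\compl{2}/G_{r,a}$.
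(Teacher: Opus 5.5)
Your argument is correct and follows the paper's own route: chain Remark~\ref{rem:dualkwahl}, Theorem~\ref{thm:k-wahlchain-zcf}(i) and Remark~\ref{rem:weight1-extremal}. Your extra step is something the paper omits, and it is needed. You read off from the inductive construction that the weight-$1$ zero continued fraction lowers two \emph{distinct} entries by $1$. Weight $1$ in Definition~\ref{def:zcfweight} also allows a single entry lowered by $2$. For example, $[2,3,2]\mapsto[2,1,2]=0$ for the $T$-singularity $\frac{1}{8}(1,3)$, and this singularity has no extremal P-resolution. So the identification asserted in Remark~\ref{rem:weight1-extremal} does not hold for arbitrary chains, and your patch is what closes that gap here.
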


Yusuke Sato\\
\textsc{Department of Mathematics, Osaka Institute of Technology, 5-16-1 Ohmiya, Asahi-ku, Osaka, 535-8585, Japan}.\\
email: yusuke.sato@oit.ac.jp \\

\end{document}